\title{The Signature Kernel}
\author{Darrick Lee and Harald Oberhauser}
\newcommand{\R}{\mathbb{R}}
\newcommand{\E}{\mathbb{E}}
\newcommand{\N}{\mathbb{N}}
\newcommand{\cX}{\mathcal{X}}
\newcommand{\cF}{\mathcal{F}}
\newcommand{\cP}{\mathcal{P}}
\newcommand{\cK}{\mathcal{K}}
\newcommand{\bs}{\mathbf{s}}
\newcommand{\bw}{\mathbf{w}}
\newcommand{\bv}{\mathbf{v}}
\newcommand{\bx}{\mathbf{x}}
\newcommand{\by}{\mathbf{y}}
\newcommand{\bX}{\mathbf{X}}
\newcommand{\bY}{\mathbf{Y}}
\newcommand{\bV}{\mathbf{V}}
\newcommand{\bW}{\mathbf{W}}
\newcommand{\Hil}{H}
\newcommand{\be}{\mathbf{e}}
\newcommand{\bi}{\mathbf{i}}
\newcommand{\bj}{\mathbf{j}}
\newcommand{\bt}{\mathbf{t}}
\newcommand{\Mon}{\mathsf{Mon}}
\newcommand{\Sig}{\mathsf{Sig}}
\newcommand{\signature}{\Phi_{\Sig}}
\newcommand{\signatureTrunc}[1]{\Phi_{\Sig,:{#1}}}
\newcommand{\signatureLevel}[1]{\Phi_{\Sig,#1}}
\newcommand{\IF}{\mathsf{IF}}
\newcommand{\kernel}{\operatorname{k}}
\newcommand{\sigkernel}{\kernel_{\operatorname{\Sig}}}
\newcommand{\paths}{\cX_\text{paths}}
\newcommand{\seq}{\cX_{\text{seq}}}
\newcommand{\pathsOneVar}{\cX_\text{1-var}}
\newtheorem{example}{Example}
\newtheorem{theorem}{Theorem}
\newtheorem{proposition}{Proposition}
\theoremstyle{definition}
\newtheorem{definition}{Definition}
\newtheorem{remark}{Remark}
\begin{document}

\maketitle

\begin{abstract}
    The signature kernel is a positive definite kernel for sequential data.
    It inherits theoretical guarantees from stochastic analysis, has efficient algorithms for computation, and shows strong empirical performance.
    In this short survey paper for a forthcoming Springer handbook, we give an elementary introduction to the signature kernel and highlight these theoretical and computational properties.
\end{abstract}

\section{Introduction}
Let $\cX$ be an arbitrary set and consider
\begin{align}
\seq \coloneqq \{\bx=(\bx_0,\ldots,\bx_L) : L \ge 0,\,\bx_i \in \cX\},
\end{align}
the set of sequences of arbitrary length $L$ in the set $\cX$.
Given a kernel\footnote{Throughout, we use the term \emph{kernel} to refer to a positive definite function.} on $\cX$,
\begin{align}
\kernel : \cX \times \cX \to \R,
\end{align}
the \emph{signature kernel} transforms $\kernel$ into a kernel on $\seq$,
\begin{align}
\sigkernel : \seq \times \seq \to \R.
\end{align}
Constructing kernels for non-Euclidean domains such as $\seq$ is often challenging; in particular, even when $\cX=\R^d$ is linear, the space of sequences $\seq$ is non-linear since there is no natural addition operation for sequences of different length.
Above, we defined $\sigkernel$ on the domain of discrete-time sequences $\seq$ since this is how we typically apply it in practice when only discrete-time data is available, but the domain of $\sigkernel$ naturally extends to continuous time paths.
More generally, the signature kernel is of interest for three primary reasons.
\begin{enumerate}[label=(\roman*)]
    \item \textbf{It inherits attractive theoretical guarantees} from stochastic analysis and the properties of the classical signature.
    These manifest as universality (the ability to approximate non-linear functions), characteristicness (the ability to characterize probability measures), invariance to time-reparametrization, and convergence in the scaling limit when sequences (discrete time) approximate paths (continuous time).
 
    \item \textbf{It overcomes bottlenecks of the signature features}, both in terms of computational complexity and in terms of expressiveness since it allows us to use signatures of paths after they have been lifted to infinite-dimensional state space.
    That is, it can indirectly compute and use tensors in infininite-dimesional spaces that describe the underlying sequence.
    
    \item \textbf{It leverages modular tools from kernel learning}. Given a kernel $\kernel$ with good theoretical properties, these typically transfer to associated signature kernel $\sigkernel$, and general tools from kernel learning apply. 
    For example, so-called universality and characteristicness \cite{simon2020metrizing} of $\kernel$ are preserved for the associated signature kernel $\sigkernel$. 
    This also implies that $\sigkernel$ induces a computable metric, the so-called MMD metric, between laws of stochastic processes. 
    Further, Gaussian processes can be defined on the genuine non-Euclidean domain $\seq$ by using $\sigkernel$ as a covariance function, generic kernel quadrature constructions can be used, etc.
\end{enumerate}
A guiding theme for this survey article is that the iterated integrals, which constitute the signature, and hence the signature kernel, play the same role for sequences or paths as monomials do for vectors in $\R^d$, see Table~\ref{table:monomials}.
The key message is that this can be simultaneously a blessing and a curse.
Algebraically, polynomials have attractive properties but they quickly become computationally prohibitive in high-dimensions; analytically, trouble arises on non-compact sets, and often a more flexible class than simple monomials forms a better basis. Kernelization allows us to circumvent these drawbacks.

\section{Monomials, Polynomials, and Tensors}\label{sec:monomials}
Classical multivariate monomials and moments are convenient tools for approximating functions on $\R^d$ and respectively, representing the law of vector-valued random variables via the moment sequence.
A concise way to express the degree $m$ monomials of a vector $\bx=(\bx^1,\ldots,\bx^d) \in \R^d$ is with tensors 
\begin{align}\label{eq:monomial}
(\R^d)^{\otimes m} \ni\bx^{\otimes m} \simeq ((\bx^{1})^{m_1}\cdots (\bx^{d})^{m_d})_{m_1+\cdots m_d =m, \, m_i \in \mathbb{N}_0}.
\end{align}
We refer to the elements of $(\mathbb{R}^d)^{\otimes m}$ as tensors of degree $m$ in $d$ dimensions.

\paragraph{A Toy Example: Monomials in $\R^d$.}
Let $\cK \subset \R^d$ be compact and define %
\[
\Phi_{\Mon}(\bx) \coloneqq \left(1,\bx,\frac{\bx^{\otimes 2}}{2!},\frac{\bx^{\otimes 3}}{3!},\ldots\right) \in \prod_{m \ge 0} (\R^d)^{\otimes m} \quad \text{and} \quad \Phi_{\Mon,m}(\bx)\coloneqq \frac{\bx^{\otimes m}}{m!}.
\]
The map $\Phi_\Mon$ takes a vector $\bx = (\bx^i)_{i \in \{1,\ldots,d\}} \in \R^d$ as an input, and outputs a sequence of tensors of increasing degree. This sequence consists of a scalar $\bx^{\otimes 0} \coloneqq 1$, a vector $\bx^{\otimes 1} =\bx $, a matrix $\bx^{\otimes 2} = (\bx^{i_1}\bx^{i_2})_{i_1,i_2 \in \{1,\ldots,d\}}$, a tensor $\bx^{\otimes 3} = (\bx^{i_1}\bx^{i_2}\bx^{i_3})_{i_1,i_2,i_3 \in \{1,\ldots,d\}}$, and so on.
The co-domain of $\Phi_{\Mon}$ is a graded (in $m=0,1,2,\ldots$) linear space\footnote{Addition is defined degree-wise $(\bs_0,\bs_1,\bs_2,\ldots) + (\bt_0,\bt_1,\bt_2,\ldots) \coloneqq (\bs_0+\bs_0,\bs_1+\bt_1,\bs_2+\bt_2,\ldots)$. 
That is, we add scalars to scalars, vectors to vectors, etc.}, hence we can apply a linear functional $\bw$ to $\Phi_{\Mon}$. 
The Stone--Weierstrass theorem guarantees that for every $f \in C(\cK,\R)$ and $\epsilon >0$, there exists a $M \ge 1$ and  coefficients $\bw^0, \bw^1, \ldots, \bw^d,\bw^{1,1},\ldots,\bw^{d,d},\bw^{1,1,1},\ldots,\bw^{d,\ldots,d} \in \R$, equivalently represented\footnote{Recall that $\bw \in \oplus(\R^d)^{\otimes m}$ denotes a sequence $\bw = (\bw^I)$ where $\bw^I$ is nonzero for finitely many multi-indices $I$, whereas $\bw' \in \prod (\R^d)^{\otimes m}$ denotes a sequence $\bw' = (\bw'^I)$ where $\bw'^I$ may be nonzero for infinitely many $I$.} as $\bw \in \bigoplus(\R^d)^{\otimes m}$, such that 
\[
\sup_{\bx \in \cX} \left|f(\bx) - \underbrace{\sum_{m=0}^M \bw^{i_1,\ldots,i_m}\frac{\bx^{i_1}\cdots \bx^{i_m}}{{m!}}}_{\eqqcolon\langle \bw, \Phi_{\Mon}(\bx)\rangle} \right| < \epsilon.
\]
Hence, any choice of $\bw$ induces a function from $\cK$ to $\R$, $\left(\bx \mapsto \langle \bw, \Phi_{\Mon}(\bx)\rangle \right) \in \mathbb{R}^\cK$, which is a linear function in terms of $\Phi_\Mon$.

Another well-known result about monomials is that they can characterize laws of random variables;  in fact, convergence of the moment sequence is equivalent to classical weak convergence. 
We sum these up in the following proposition.
\begin{proposition}\label{prop:universal phi}
    Let $\cK \subset \cX = \mathbb{R}^d$ be compact. 
    Then for every $f \in C_b(\cK,\mathbb{R})$ and $\epsilon>0$ there exists some $\bw \in \bigoplus (\R^d)^{\otimes m}$ such that
    \begin{equation}\label{eq:uniform compacts}
    \sup_{\bx \in \cK}|f(\bx) - \langle \bw, \Phi_\Mon(\bx) \rangle| < \epsilon.
    \end{equation}
    Let $\bX,\bX_1,\bX_2,\ldots$ be a sequence of $\cK$-valued random variables.
    Then
    \[ \operatorname{Law}(\bX) \mapsto \mathbb{E}[\Phi_\Mon(\bX)]\coloneqq \left(1,\mathbb{E}[\bX], \frac{\mathbb{E}[\bX^{\otimes 2}]}{2!},\ldots \right)\]
    is injective and the following are equivalent:
    \begin{enumerate}
    \item\label{itm:weak convergence} $\forall f \in C_b(\cX,\mathbb{R}),\quad \lim_{n \to \infty}\mathbb{E}[f(\bX_n)] = \mathbb{E}[f(\bX)]$;
    \item\label{itm:moment convergence}  $\forall m \in \mathbb{N}, \quad\lim_{n \to \infty}\mathbb{E}[\bX^{\otimes m}_n]= \mathbb{E}[\bX^{\otimes m}]$.
    \end{enumerate}
\end{proposition}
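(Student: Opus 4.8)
The plan is to reduce everything to the Stone--Weierstrass theorem together with a standard tightness argument. The first observation is that as $\bw$ ranges over $\bigoplus(\R^d)^{\otimes m}$, the functions $\bx \mapsto \langle \bw, \Phi_{\Mon}(\bx)\rangle$ range over exactly the polynomial functions on $\R^d$: the factorial normalization $\bx^{\otimes m}/m!$ merely rescales coordinates and does not change the linear span of the monomials. Let $\mathcal{A} \subset C(\cK,\R)$ denote this set of polynomial functions. Then $\mathcal{A}$ is a subalgebra (the product of two polynomials is again a polynomial), it contains the constants (the degree-$0$ term), and it separates points of $\cK$ (already the coordinate functions $\bx \mapsto \bx^i$ do so). Hence Stone--Weierstrass gives that $\mathcal{A}$ is uniformly dense in $C(\cK,\R)$, which is precisely the estimate \eqref{eq:uniform compacts}.

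Next I would deduce injectivity of the moment map from this density. Two $\cK$-valued random variables $\bX, \bY$ satisfy $\E[\Phi_{\Mon}(\bX)] = \E[\Phi_{\Mon}(\bY)]$ if and only if $\E[p(\bX)] = \E[p(\bY)]$ for every polynomial $p$, since the monomials span all polynomials. For arbitrary $f \in C(\cK, \R)$ and $\epsilon > 0$, choosing a polynomial $p$ with $\sup_{\cK}|f - p| < \epsilon$ and applying the triangle inequality gives $|\E[f(\bX)] - \E[f(\bY)]| < 2\epsilon$; letting $\epsilon \to 0$ yields $\E[f(\bX)] = \E[f(\bY)]$ for all $f \in C(\cK,\R)$, so $\Law(\bX) = \Law(\bY)$ by the Riesz representation theorem. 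Since $\cK$ is compact, every $f \in C(\cK,\R)$ extends to some element of $C_b(\cX,\R)$, so this is the correct notion of equality of laws.

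For the equivalence, the implication \eqref{itm:weak convergence} $\Rightarrow$ \eqref{itm:moment convergence} is immediate: on the compact set $\cK$ each coordinate of $\bx \mapsto \bx^{\otimes m}$ is a bounded continuous function, so weak convergence forces convergence of the corresponding moments. The reverse implication \eqref{itm:moment convergence} $\Rightarrow$ \eqref{itm:weak convergence} is the substantive direction and is where I expect the main difficulty. Because $\cK$ is compact, the family $\{\Law(\bX_n)\}$ is automatically tight, so by Prokhorov's theorem it is relatively compact in the weak topology. Let $\mu$ be the weak limit of any convergent subsequence $\Law(\bX_{n_k})$. Applying the already-proven easy direction to this subsequence shows that the moments of $\mu$ equal $\lim_k \E[\bX_{n_k}^{\otimes m}] = \E[\bX^{\otimes m}]$ by hypothesis \eqref{itm:moment convergence}. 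Thus $\mu$ and $\Law(\bX)$ share the same moment sequence, and the injectivity established above forces $\mu = \Law(\bX)$. Since every subsequence of $\{\Law(\bX_n)\}$ has a further subsequence converging to the same limit $\Law(\bX)$, the standard subsequence principle yields $\Law(\bX_n) \to \Law(\bX)$ weakly, i.e. \eqref{itm:weak convergence}.

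The main obstacle is thus the passage from moment convergence to weak convergence: it is not purely algebraic and genuinely uses compactness of $\cK$ twice — once to guarantee tightness (so that limit points exist) and once, via density of polynomials, to pin those limit points down uniquely through injectivity. On non-compact domains this step can fail, since moments need not determine the law and tightness is no longer automatic; this is exactly the phenomenon the introduction flags when it warns that trouble arises on non-compact sets.
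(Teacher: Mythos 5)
Your proof is correct and takes essentially the same route as the paper's: Stone--Weierstrass gives the uniform approximation \eqref{eq:uniform compacts} and injectivity of the moment map, while the equivalence is handled by compactness $\Rightarrow$ tightness $\Rightarrow$ Prokhorov, identifying any subsequential weak limit with $\operatorname{Law}(\bX)$ via the shared moment sequence, and concluding by the subsequence principle. You merely spell out details the paper leaves implicit (the verification of the Stone--Weierstrass hypotheses, the explicit injectivity argument via Riesz representation, and the subsequence principle).
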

\begin{proof}
That \eqref{itm:weak convergence} implies \eqref{itm:moment convergence} follows immediately since polynomials are bounded on compacts, hence they form a subset of $C_b(\cK,\mathbb{R})$. 
To see the reverse implication note that the compactness assumption implies tightness of the laws of $(\bX_n)$ and this allows us to use Prokhorov's Theorem. 
This reduces the proof to show that if $(\bX_n)$ converges weakly along a subsequence to a random variable $\bY$ then $\operatorname{Law}(\bY)=\operatorname{Law}(\bX)$. 
But if $\bX_n$ converges weakly to $\bY$, then $\lim_n \mathbb{E}[p(\bX_n)]=\mathbb{E}[p(\bX)]$ for any polynomial $p$ since polynomials are bounded on compacts. Hence we must have $\mathbb{E}[p(\bX)]=\mathbb{E}[p(\bY)]$ for any polynomial $p$. Finally, since polynomials are dense in $C_b(\cK,\mathbb{R})$ (by Stone--Weierstrass that we already mentioned above), the result follows. 
\end{proof}
The above shows that linear functionals of monomials, that is linear combination of the coordinates of $\Phi_\Mon$, approximate continuous functions arbitrarily well. 
Further, the expected value of the coordinates of $\Phi_{\Mon}$ characterizes the law of a random variable and convergence of these coordinates induces the classical weak convergence topology on the space of probability measures.
\begin{remark}[Universality and Characteristicness]
In the statistical learning literature, these properties of $\Phi_{\Mon}$ are known as \emph{universality} of $\Phi_\Mon$ with respect to the class of bounded and continuous functions and \emph{characteristicness} of $\Phi_\Mon$ with respect to probability measures on $\cK$. 
The general definition calls a map $\Phi: \cK \to H$ from a set $\cK$ into a Hilbert space $H$ 
\begin{enumerate*}[label=(\roman*)]
    \item \textbf{universal} with respect to a function class $\cF \subset \R^\cK$ if the space of linear functionals $\cF_0 \coloneqq \{ \langle \bw, \Phi(\cdot)\rangle: \cK \rightarrow \R \, : \, \bw \in H\}$ is dense in $\cF$;
     \item \textbf{characteristic} with respect to probability measures $\cP(\cK)$ if the expected map $\mu\mapsto \E_{X \sim \mu}[\Phi(X)]$ that maps a probability measure on $\cK$ into $\Hil$ is injective. 
\end{enumerate*}
In fact, if one uses a more general definition of characteristicness by regarding distributions, that is elements of $\cF'$, it can be shown that characteristicness and universality are equivalent. 
This makes many proofs easier, see \cite[Definition 6]{chevyrev_signature_2022} and \cite{simon2020metrizing}.
\end{remark}

The proof of~\Cref{prop:universal phi} made heavy use of the compactness assumption but this is not a deficiency of the proof. The following example shows that things really go wrong on non-compact spaces.

\paragraph{A warning: compactness and boundedness are essential.} 
Rather than a compact subset $\cK$, let's consider the entire space $\cX = \R^d$. 
In this case, moments may not exist for random variables on $\cX$ with unbounded support but the situation is even worse: even if all moments exist, they fail to characterize the law of the random variable.

\begin{figure}[!htbp]
    \centering
    \includegraphics[width=0.5\textwidth]{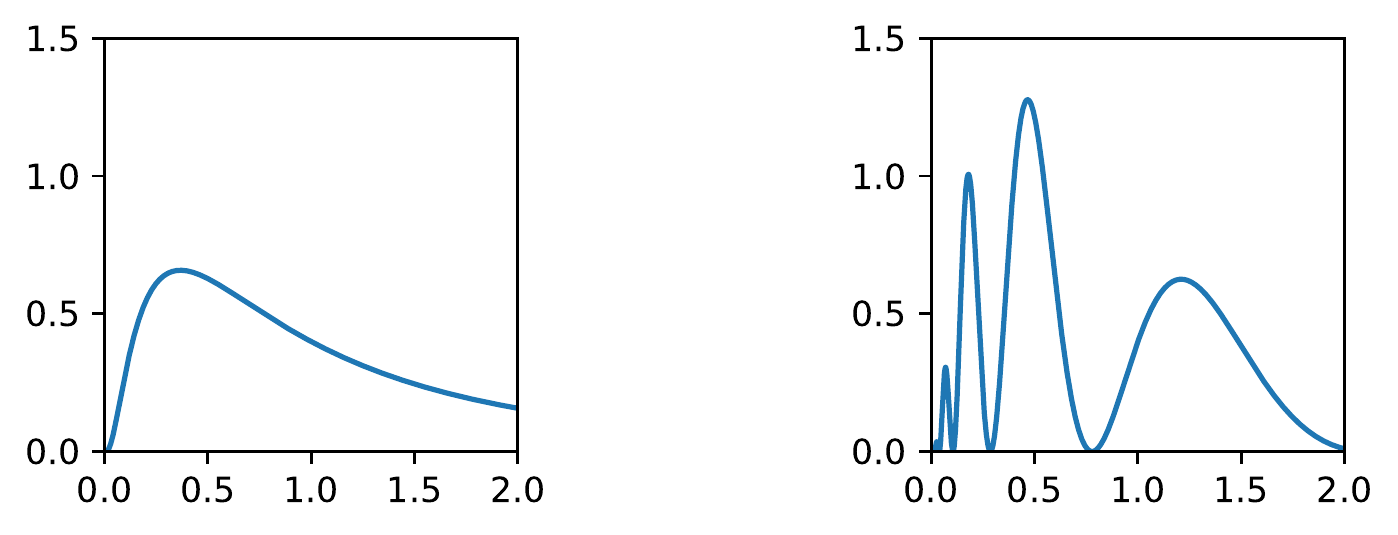}
    \caption{The probability density functions of $\bX$ (left) and $\bY$ (right).}
\end{figure}

This already happens in dimension $d=1$ as the following classical counterexample shows. Suppose $\bX$ is a standard log-normal, that is {$\exp(\bX)\sim N(0,1)$}.
There exists another real-valued random variable $\bY$ that is not log-normal\footnote{The random variable $\bY$ is explicitly specified by the density $\mathbb{P}(\bY \in dy) = f(y) (1+\sin(2 \pi \log(y))$ where $f$ denotes the density the log-normal $\bX$. In fact, \cite{heyde1963property} shows that there exist uncountably many random variables that have the same distribution as $\bX$.}, so that $\operatorname{Law}(\bX) \neq \operatorname{Law}(\bY)$, but for all $m \in \N$,
\[
\mathbb{E}[\bX^m] =\mathbb{E}[\bY^m] =  \exp\left(\frac{m^2}{2}\right).
\]
Hence, ``characteristicness'' fails on non-compact sets.
Similarly, universality, that is the uniform approximation of functions $f(\bx)$ from Equation~\eqref{eq:uniform compacts}, is hopeless since polynomials tend to $\pm\infty$ as the argument $\bx$ becomes unbounded as $\|\bx\| \rightarrow \infty$.

Polynomials are algebraically and analytically nice on compact sets, but the fundamental issue is that while their algebraic properties extend to non-compact sets (they still form a ring) we run into issues in their analysis and probabilistic properties since they grow too quickly.
We emphasize that ``tightness'' arguments are not so useful here; it is easy to find for any $\epsilon>0$ a compact set that carries $(1-\epsilon)$ of the mass of the probability measure; the issue is that the functions we care about, monomials, explode on the space domain that carries only $\epsilon$ of the mass.
A classical way to deal with this is to restrict to random variables with conditions on moment growth, but this is unsatisfactory for our purposes since such assumptions quickly become abstract and in general, it is not possible to verify them in a non-parametric inference setting as is common in machine learning tasks.
After all, things already went bad for a simple log-normal.
Finally, a closely related issue is that moments are not robust statistics, that is, if we have to estimate expectation by i.i.d.~samples from $\cX$ then small errors in these samples can lead to large estimation errors. 

We will return to all these issues but for now, we ignore this issue with non-compact sets and ask a much more basic question: What is the natural generalization of monomials from $\cX = \mathbb{R}^d$ to the space $\seq$ of sequences in $\R^d$?

\section{Iterated Integrals as Non-Commutative Monomials}
We will now go beyond the classical setting and consider monomials for sequential data. 
For simplicity we consider paths and sequence here in $\cX=\R^d$ but in Section \ref{sec:kernel learning} we discuss infinite-dimensional state spaces.
While we work with discrete sequences in $\seq$ in practice, it is often convenient to formulate our objects for continuous paths. However, we can naturally view a discrete sequence as continuous path by piecewise linear interpolation. Indeed, suppose we have a sequence $\bx = (\bx_0, \ldots, \bx_L) \in \seq$. By abuse of notation, we define the corresponding continuous path $\bx: [0,L] \rightarrow \R^d$ by
\begin{equation}\label{eq:pwl_interpolation}
    \bx(t) = (1-t) \bx_{i-1} + t\bx_i, \quad \quad t \in [i-1, i].
\end{equation}
In the continuous setting, we will consider bounded variation paths
\[
\pathsOneVar\coloneqq \{\bx\in C([0,T],\cX) \,:\,  T>0,  \,\, \|\bx \|_1 <\infty,\, \bx(0)=0\}
\]
where
\[\|\bx\|_{1}\coloneqq \sup_{\substack{0=t_0 < \cdots <t_n=T\\ n \in \mathbb{N}}} \sum_{i=1}^N|\bx(t_{i})-\bx(t_{i-1})|
\]
denotes the usual 1-variation of a path.
We restrict to paths which begin at the origin since the method that we discuss in this article (in particular, the path signature) is translation invariant.
Further, we take $\cX=\R^d$ finite-dimensional for simplicity of exposition.
There are several ways to put a topology on $\pathsOneVar$  and we take the same as in \cite{chevyrev_signature_2022}.
Finally, we note that Equation~\ref{eq:pwl_interpolation} defines an embedding
\[
    \seq \hookrightarrow \pathsOneVar.
\]

The previous article~\cite{chevyrev_primer_2016} introduced the path signature,
\[
\signature(\bx) \coloneqq \left(1,\int d\bx, \int d\bx^{\otimes 2}, \ldots \right) \in \prod_{m \ge 0} (\R^d)^{\otimes m}
\]
which maps bounded variation paths to a sequence of tensors.
The degree $m$ tensor is given as an $m$-times iterated integral, 
\[
\signatureLevel{m}(\bx)\coloneqq \int d\bx ^{\otimes m} \coloneqq \int_{0<t_1<\cdots<t_m<T} d\bx(t_1) \otimes \cdots \otimes d\bx (t_m) \in (\R^d)^{\otimes m},
\]
and by convention $(\R^d)^{\otimes 0}=\R$.
Alternatively, in coordinates, we can identify
\[\label{eq:signature}
(\R^d)^{\otimes m}\ni\int d\bx^{\otimes m}  \simeq \left(\int_{0<t_1<\cdots < t_m <T} d\bx^{i_1}(t_1) \cdots d\bx^{i_m}(t_m)\right)_{i_1,\ldots,i_m \in \{1,\ldots,d\}}
\]
where 
\[
\int_{0<t_1<\cdots < t_m <T} d\bx^{i_1}(t_1) \cdots d\bx^{i_m}(t_m)= \int_{0<t_1<\cdots < t_m <T} \dot \bx^{i_1}(t_1)  \cdots  \dot\bx^{i_m}(t_m)dt_1 \cdots dt_m
\]
is well-defined as Riemann-Stieltjes integral since bounded variation paths are almost everywhere differentiable.
For less regular paths, some care has to be taken about the definition of the iterated integrals that constitute the signature, but the same principles apply (see Section \ref{sec:applications}); for paths in infinite-dimensional state spaces the same construction applies and we discuss the case of Hilbert spaces in Section \ref{sec:kernel learning}.
Such iterated integrals can be viewed as a generalization of monomials, and here we highlight the similarities.
\begin{description}
\item{\textbf{A Graded Description.}}
One way to think about the signature map 
\[\signature: \pathsOneVar \to \prod_{m \ge 0}(\R^d)^{\otimes m}\,\quad \bx \mapsto \left(1,\int d\bx,\int d\bx^{\otimes 2},\ldots\right)\] is that it is the natural generalization of the monomial map
\[
\Phi_{\Mon}:\cX \to \prod_{m \ge 0} (\R^d)^{\otimes m}, \quad \bx \mapsto \left(1,\bx, \frac{\bx^{\otimes 2}}{2!},\ldots\right)
\]
from the domain $\cX$ to $\pathsOneVar$. 
Both describe the underlying object (a vector or a path) as a series of tensors of increasing degree.
    \item{\textbf{Signatures Generalize Monomials.}}
    The signature $\signature$ is a strict generalization of $\Phi_{\Mon}$. For a straight line, $[0,1]\ni t \mapsto t \cdot \bv$ with $\bv \in \R^d$, we have
\begin{equation}\label{eq:sig_linear_path}
\Phi_{\Sig}(t \mapsto t \cdot \bv) = \left(1,\bv,\frac{\bv^{\otimes 2}}{2!},\frac{\bv^{\otimes 3}}{3!},\ldots\right)=\Phi_{\Mon}(\bv)
\end{equation}
which follows immediately from the definition of $\signature$ since \[\int d (t\cdot \bv)^{\otimes m} = \int_{0 < t_1 < \cdots <t_m <1} \bv^{\otimes m} dt_1 \cdots dt_m = \frac{\bv^{\otimes m}}{m!}.\]
\end{description}

Despite the similarity in their definitions, there are also important differences to highlight.

\begin{description}
\item{\textbf{A Genuine Non-linear Domain.}}
Unlike $\R^d$, the domain $\pathsOneVar$ of $\signature$ is not linear: there is no natural addition of elements in $\pathsOneVar$. In particular, there is no natural way to add paths $\bx:[0,T]\to \R^d$ and $\by:[0,T'] \to \R^d$ for $T \neq T'$.
\item{\textbf{Asymmetric Tensors.}}
There is an important difference in the co-domains of the maps $\signature$ and $\Phi_{\Mon}$. 
The latter takes values in the subset of $\prod (\R^d)^{\otimes m}$ of symmetric tensors. 
For example, if we identify the degree $m=2$ tensors in $\Phi_{\Mon}$ as $d\times d$ matrices, then $\bx^{\otimes 2}$ is a symmetric matrix, since the $(i,j)$ coordinate of $\bx^{\otimes 2}$ is simply the usual product of the $i$-th and $j$-th coordinate of $\bx$, and $\bx^i \cdot \bx^j = \bx^j \cdot \bx^i$.

However, in general the tensors in $\signature$ are not symmetric. For example, by once again considering the $(i,j)$ and $(j,i)$ coordinate of $\int d\bx^{\otimes 2}$, we have in general that
\[
\int_{0<t_1<t_2<T} d\bx^i(t_1) d\bx^j(t_2) \neq \int_{0<t_1<t_2<T} d\bx^j(t_1) d\bx^i(t_2).
\]
\item{\textbf{Non-commutative Monomials.}}
Because $\Phi_{\Sig}$ results in non-symmetric tensors, the algebraic structure behind these monomials is non-commutative. Given two tensors $\bv \in (\R^d)^{\otimes m}$ and $\bw \in (\R^d)^{\otimes k}$, the tensor product $\bv \otimes \bw \in (\R^d)^{\otimes (m+k)}$ is defined coordinate-wise as
\[
    (\bv \otimes \bw)^{i_1, \ldots, i_{m+k}} \coloneqq \bv^{i_1, \ldots, i_m} \cdot \bw^{i_{m+1}, \ldots, i_{m+k}}.
\]
In general, the tensor product is \emph{non-commutative}, $\bv \otimes \bw \neq \bw \otimes \bv$. However, when we are restricted to symmetric tensor such as in $\Phi_\Mon$, the tensor product is commutative
\[
    \bx^{\otimes m} \otimes \bx^{\otimes k} = \bx^{i_1} \cdots \bx^{i_{m+k}} = \bx^{\otimes k} \otimes \bx^{\otimes m}.
\]
\end{description}
\begin{remark}
    We refer to $\Phi_{\Sig}$ as the signature (feature) map but developing a path into a series of iterated integrals is a classical construction that has been studied by many communities but known under different names such as Chen-Fliess series, Magnus expansion, time-ordered exponential, path exponential, chronological calculus etc.
    Modern origins go back to Chen's work in \cite{chen_iterated_1954,chen_integration_1957,chen_integration_1958} which can be thought as an extension of classical Volterra series \cite{schetzen_volterra_2006}. 
    The rich Lie-algebraic properties of the signature received much attention in the study of free Lie algebras \cite{reutenauer2003free} and in the 1980's, control theorists used $\Phi_{\Sig}$ extensively to linearize controlled differential equations \cite{BSMF_1981__109__3_0,brockett1976volterra}.
    Much of these results were then generalized in so-called rough path theory \cite{lyons_system_2007} to deal with ``rough'' trajectories such as the sample of paths of stochastic processes where standard integration doesn't apply. 
    A detailed history is beyond the scope of this article, but the above references point to a rich literature of disparate communities which have used (variations of) what we denote $\Phi_{\Sig}$ for their own research agenda.
\end{remark}
\section{Universality and Characteristicness of the Signature}\label{sec:universal}
In order to further establish the relationship between the monomials $\Phi_{\Mon}:\R^d \to \prod_{m \ge 0} (\R^d)^{\otimes m}$ and the signature $\signature:\pathsOneVar \to \prod_{m \ge 0}(\R^d)^{\otimes m}$, we now consider how the signature can be used to study functions and measures on $\pathsOneVar$. 
In particular, given some $\bw \in \bigoplus (\R^d)^{\otimes m}$, we define a function $\langle \bw, \signature(\cdot)\rangle: \pathsOneVar \to \R$ as before; in coordinates, 
\[\langle \bw,\signature(\bx) \rangle \coloneqq \sum_{m=0}^M \sum_{i_1,\ldots,i_m \in \{1,\ldots,d\}} \bw^{i_1,\ldots,i_m} \int_{0<t_1<\cdots <t_m<T} d\bx^{i_1}(t_1)\cdots \bx^{i_m}(t_m).\]
A direct calculation shows that the product of two such linear functionals can be expressed again as a linear functional of $\Phi_\Sig$. More formally, for $\bw_1,\bw_2\in \bigoplus (\R^d)^{\otimes m}$ there exists some $\bw_{1,2} \in \bigoplus (\R^d)^{\otimes m}$ such that
\begin{equation}\label{eq:product}
\langle \bw_1, \signature(\bx) \rangle \langle \bw_2,\signature(\bx) \rangle = \langle \bw_{1,2}, \signature(\bx) \rangle %
\end{equation}
for all $\bx \in \pathsOneVar$.
Put differently, products of (non-commutative) polynomials are again (non-commutative) polynomials.
Although the proof of this is just a direct application of integration by parts, the algebraic (so-called shuffle) structure of $\bw_{1,2}$ gives rise to many interesting algebraic questions but is beyond the scope of this short introduction.
The primary point is that \eqref{eq:product} shows that linear functionals of paths form an algebra. 
If we additionally have the point-separating property, in which
\[
\bx \mapsto \signature(\bx)
\]
is injective, then we can immediately generalize Proposition \ref{prop:universal phi} from compact subsets in $\R^d$ to compact subsets in $\pathsOneVar$.
However, in general, $\bx \mapsto \signature(\bx)$ is only injective up to so-called tree-like equivalence. 
We refer to \cite{hambly_uniqueness_2010} for details on tree-like equivalence.\medskip

We can work around this tree-like equivalence in the next Proposition by introducing a time coordinate: rather than using $t \mapsto \bx(t) \in \R^d$, we consider 
\begin{align}\label{eq:time_parametrization}
t \mapsto \bar \bx (t) \coloneqq (t,\bx(t))\in \R^{d+1}.
\end{align}

When we consider such time-parametrized paths, the signature $\signature$ is injective, and we obtain the following generalization of Proposition \ref{prop:universal phi}.

\begin{proposition}\label{prop:universal sig}
Let $\cK \subset \pathsOneVar$ be a compact set and denote with $\bar \cK=\{\bar \bx: \bx \in \cK\}$ the set of paths in $\cK$ augmented with a time coordinate from Equation~\ref{eq:time_parametrization}. %
Then for every $f\in C_b(\bar \cK,\R)$ and $\epsilon>0$ there exists a $\bw \in \bigoplus (\R^{d+1})^{\otimes m}$ such that
\[
\sup_{\bx \in \cK} | f(\bar \bx) - \langle \bw, \signature(\bar \bx) \rangle | < \epsilon.
\]
Let $\bX,\bX_1,\bX_2,\ldots$ be a sequence of $\cK$-valued random variables, that is, each $\bX_i=(\bX_i(t))_{t \in [0,T_i]}$ is a stochastic process.
Then the expected signature map
\[
\operatorname{Law}(\bar \bX) \mapsto \mathbb{E}[\signature(\bar \bX)]
\]
is injective and 
\begin{enumerate}
    \item\label{itm:sig weak convergence} $\forall f \in C_b(\bar \cK,\mathbb{R})\quad \lim_{n \to \infty}\mathbb{E}[f( \bar \bX_n)] = \mathbb{E}[f(\bar \bX)]$,
    \item\label{itm:sig moment convergence} $\forall m \in \mathbb{N}, \quad\lim_{n \to \infty}\mathbb{E}[\int d \bar \bX^{\otimes m}_n]= \mathbb{E}[\int d\bar \bX^{\otimes m}]$.
    \end{enumerate}
\end{proposition}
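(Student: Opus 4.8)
The plan is to adapt the proof of \Cref{prop:universal phi} almost verbatim, replacing the direct appeal to Stone--Weierstrass with its subalgebra version and supplying the two algebraic ingredients that are new in the path setting: the product formula \eqref{eq:product} and the injectivity of $\signature$ on time-augmented paths. Throughout I would work on the compact metric space $\bar\cK$, noting that $\bar\cK$ is compact because the time-augmentation map $\bx \mapsto \bar\bx$ is continuous for the chosen topology on $\pathsOneVar$ and continuous images of compact sets are compact.

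For the universality statement, I would consider the set
\[
\cF_0 \coloneqq \{ \bar\bx \mapsto \langle \bw, \signature(\bar\bx)\rangle \,:\, \bw \in \textstyle\bigoplus(\R^{d+1})^{\otimes m} \} \subset C(\bar\cK, \R)
\]
and verify the hypotheses of Stone--Weierstrass. First, $\cF_0$ is a linear subspace since $\langle\cdot, \signature(\bar\bx)\rangle$ is linear in the coefficient tensor, and it contains the constants by taking $\bw = (c,0,0,\ldots)$ together with $\signatureLevel{0} \equiv 1$. Second, $\cF_0$ is closed under multiplication: this is exactly the content of \eqref{eq:product}, which says the pointwise product of two such functionals is again a functional of the same form. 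Third, $\cF_0$ separates points: since $\bar\bx \mapsto \signature(\bar\bx)$ is injective, any two distinct paths have distinct signatures, so they differ in some tensor entry, and choosing $\bw$ to pick out that entry yields a functional taking different values on the two paths. Stone--Weierstrass then gives density of $\cF_0$ in $C(\bar\cK,\R)$ in the uniform norm, which is the claimed approximation.

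For the second part, injectivity of the expected signature map follows from universality: if two laws had the same expected signature, then $\E[\langle\bw, \signature(\bar\bX)\rangle] = \E[\langle\bw, \signature(\bar\bY)\rangle]$ for all $\bw$, hence $\E[f(\bar\bX)] = \E[f(\bar\bY)]$ for all $f \in C_b(\bar\cK,\R)$ by density, forcing the laws to coincide on the compact metric space $\bar\cK$. The equivalence of the two convergence notions then follows the template of \Cref{prop:universal phi}. The implication \eqref{itm:sig weak convergence}$\Rightarrow$\eqref{itm:sig moment convergence} is immediate once one knows that each entry of $\bar\bx \mapsto \int d\bar\bx^{\otimes m}$ is continuous, hence bounded on the compact $\bar\cK$, so it lies in $C_b(\bar\cK,\R)$. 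For the converse \eqref{itm:sig moment convergence}$\Rightarrow$\eqref{itm:sig weak convergence}, compactness of $\bar\cK$ makes the family $\{\Law(\bar\bX_n)\}$ tight, so Prokhorov's theorem applies; along any weakly convergent subsequence $\bar\bX_{n_k} \to \bar\bY$ the expected signatures converge to $\E[\signature(\bar\bY)]$, while by hypothesis they also converge to $\E[\signature(\bar\bX)]$, so injectivity of the expected signature map gives $\Law(\bar\bY) = \Law(\bar\bX)$; since every subsequence has a sub-subsequence converging weakly to this common limit, the whole sequence converges weakly.

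The main obstacle is not combinatorial but lies in the analytic foundations the statement quietly relies on, namely that $\signature$ is continuous for the chosen topology on $\pathsOneVar$ and that it is injective on time-augmented paths. Continuity is what simultaneously guarantees that $\bar\cK$ is compact, that the functionals in $\cF_0$ are genuinely elements of $C(\bar\cK,\R)$, and that the signature entries are bounded continuous functions usable in the weak-convergence step (which also clarifies in what sense weak convergence is taken when the $\bar\bX_n$ have differing time horizons); injectivity, which fails in general only up to tree-like equivalence but is restored by the time coordinate, is what yields point separation. Granting these two facts, which are cited from the rough-path literature, the remainder is a clean transcription of the monomial argument, with the shuffle identity \eqref{eq:product} playing the role that the ordinary product of polynomials played before.
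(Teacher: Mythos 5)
Your proposal is correct and takes essentially the same approach as the paper: the paper's own proof consists of the observation that Proposition~\ref{prop:universal sig} can be proved exactly as Proposition~\ref{prop:universal phi}, with the only new ingredient being the injectivity of $\bx \mapsto \signature(\bar\bx)$ (cited from the literature), and your argument is precisely that transcription---the shuffle identity \eqref{eq:product} supplying the algebra property for Stone--Weierstrass, injectivity supplying point separation, and the tightness/Prokhorov subsequence argument handling the converse implication.
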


This proposition is a direct generalization of Proposition \ref{prop:universal phi}; in fact, it can be proved in the same way.
The only difference is that one needs to show $\bx \mapsto \signature(\bar\bx)$ is injective.
However, this is not too difficult, see for example \cite{litter}.
Without the augmentation with a time coordinate, the same statement holds when $\cK$ is replaced by set of equivalence classes of paths, so-called tree-like equivalence\footnote{Many choices of topologies for paths and unparametrized path are possible; for a detailed study for unparametrized paths, see \cite{cass_topologies_2022}.}.
With all of these properties in mind, we can see that the signature map behaves very similarly to classical monomials, and we summarize these results in Table \ref{table:monomials}.

\renewcommand{\arraystretch}{1.4}
\begin{table}
\centering
    \begin{tabular}{ccc} \toprule
      & Monomial Map & Path Signature \\ \midrule
    Domain & $\cX = \R^d$ & $\pathsOneVar$ \\ 
    Co-domain & $\prod_{m \ge 0} (\R^d)^{\otimes m}$ & $\prod_{m \ge 0} (\R^d)^{\otimes m}$ \\ 
    Map & $\Phi_{\Mon}(\bx) = \left(\frac{\bx^{\otimes m}}{m!}\right)_{m \geq 0}$ & $\signature(\bx) = \left( \int d\bx^{\otimes m}\right)_{m \geq 0}$ \\ 
    Type of Tensors & symmetric & non-symmetric \\ 
    Continuity & w.r.t. Euclidean metric & w.r.t. 1-variation metric \\ 
    Factorial Decay & $\|\Phi_{\Mon, m}(\bx)\| \leq \frac{\|\bx\|}{m!}$ & $\|\signatureLevel{m}(\bx)\| \leq \frac{\|\bx\|_{1}}{m!}$ \\ 
    Scaling & $\Phi_{\Mon, m}(\lambda \bx) = \lambda^m \Phi_{\Mon, m}(\bx)$ & $\signatureLevel{m}(\lambda \bx) = \lambda^m \signatureLevel{m}(\bx)$ \\ 
    \midrule
     & compact $\cK \subset \cX$ & compact $\cK \subset \pathsOneVar$\\
     Universality & w.r.t. $C(\bar \cK,\R)$ & w.r.t. $C(\cK, \R)$ \\ 
     Characteristicness & w.r.t. $\cP(\bar \cK)$ & w.r.t. $\cP(\cK)$ \\
    \bottomrule
    \end{tabular}
    \caption{Comparison between classical monomials and the signature.}
    \label{table:monomials}
\end{table}
\section{The Good, the Bad, and the Ugly} \label{sec:good_bad_ugly}
While classical monomials have many attractive properties, monomials also come with some downsides that are inherited by the ``non-commutative monomials'', i.e.~the iterated integrals, that constitute $\Phi_{\Sig}$.
\begin{description}
\item[Good.] 
Monomials are building blocks to construct more complicated functions and have a well-understood algebraic structure.
For example, linear combinations of monomials (i.e.~polynomials) are closed under multiplication and form a ring, and any continuous function with compact support can be uniformly approximated by polynomials.
\item[Bad.] 
\begin{itemize}
    \item 
Monomials and hence also polynomials fail miserably on non-compact domains; they are great at providing local approximations but explode on unbounded domains. 
\item
On unbounded domains, the moment sequence may not exist. 
But even if it does, that is all moments exist, the moment sequence does in general not uniquely characterize the law of random variable. 
\item
Computational complexity: The tensor $\bx^{\otimes m} \in (\R^d)^{\otimes m}$ has $\binom{d+m-1}{m} =O(d^m)$ coordinates so that the computational cost becomes quickly prohibitive for moderate $d$ which makes regression $f(\bx) \approx \langle \bw, \Phi_{\Mon}(\bx) \rangle$ and working with moments $\mathbb{E}[\bX^{\otimes m}]$ too expensive.
\item Non-robustness: Small changes in the underlying probability measure can lead to large changes in the moment sequence. This becomes an issue for estimation; e.g.~one outlier among the sample can completely destroy moment estimates.
\end{itemize}
\item[Ugly.] 
While monomials can theoretically approximate functions and capture probability distributions, they can be a poor choice in practice.
In data science, arguably the most successful method is to learn the approximation from the data itself. 
In practice, this means providing a large parameterized family of functions and optimize over the parameter when applied to data and standard` monomials are rarely the optimal choice.
\end{description}
Because $\Phi_{\Sig}$ generalizes $\Phi_{\Mon}$ it shares the same issues.
However, all these issues are well-known and modern statistics and machine learning have developed tools to address these shortcomings. 
The remainder of this article will focus on the following two tools and develop them for $\Phi_{\Sig}$.
\begin{itemize}
\item \textbf{Kernelization} allows us to simultaneously consider a rich set of non-linearities while avoiding the combinatorial explosion in the computation of monomials. This is done by sidestepping the computation of individual monomials by obtaining efficient algorithms to directly compute the inner product between monomials.
\item \textbf{Robust Statistics} was developed in the 1980's \cite{huber_robust_2011} to provide both a formalism to quantify the robustness of a given statistic and tools to make a given statistic robust in this precise mathematical sense.
We use this to simultaneously resolve the issues with $\Phi_{\Mon}$ and $\Phi_{\Sig}$ on non-compact sets and deal with their general non-robustness.
\end{itemize}
The problems of extending kernelization and robustness methods to the path signature $\signature$ address independent issues although they can be combined. 
We discuss kernelization in Sections \ref{sec:kernel learning} and \ref{sec:signature kernel}; robustification is discussed in Section \ref{sec:robust}.

\section{Kernel Learning}\label{sec:kernel learning}
We briefly recall the main ideas behind kernelization and we refer to \cite{scholkopf2018learning,cucker2002mathematical} for a detailed introduction to kernel learning.

\paragraph{Kernelization in a Nutshell.}%
Given a set $\cK$, we want a feature map $\varphi: \cK \to \Hil$ that injects elements of $\cK$ into a Hilbert space $\Hil$ such that $\varphi$ provides sufficient non-linearities so that firstly, linear functionals of $\varphi$, $\bx \mapsto \langle \bw, \varphi(\bx) \rangle$, are expressive enough to approximate a rich set of functions and, secondly, such that $\E[\varphi(\bX)]$ captures the law of a $\cK$-valued random variable $\bX$. 
The examples we encountered are $\cK \subset \cX = \R^d$ with $\varphi=\Phi_\Mon$ as the feature map and $\cK \subset \pathsOneVar$ with $\varphi=\signature$ as the feature map.
Even for $\cK=\R$ there are many choices of $\varphi$ and~$\Hil$ so let's take a step back and simply ask what could be a good candidate for the feature space $\Hil$. 

We want $\Hil$ to be linear, large, and canonical with respect to the underlying data space $\cK$. 
A canonical linear space that we can associate with any set $\cK$ is the the space of real-valued functions $\R^\cK$ on $\cK$.
Hence, we take $\Hil$ to be subset of $\R^\cK$, that is $\Hil$ will be a space of real-valued functions on $\cK$.
Consequently the feature map $\varphi$ will be function-valued, 
\begin{align}\label{eq:kernel feature}
\cX\ni\bx \mapsto \varphi(\bx) \coloneqq \kernel(\bx,\cdot) \in \Hil\subset \R^{\cK}.
\end{align}
While this has the potential to provide a rich set of ``non-linearities'' by embedding $\cK$ in the (generically infinite-dimensional) space $\Hil$, the drawback is that learning algorithms require the evaluation of $\varphi(\bx)=\kernel(\bx,\cdot)$ which makes the direct use of \eqref{eq:kernel feature} infeasible. 
Kernel learning rests on three observations.
\begin{enumerate}
\item If we assume that $\kernel$ is positive definite, we can identify $\Hil$ as the Reproducing Kernel Hilbert Space (RKHS) induced by $\kernel$, that is $\Hil$ is the closure of 
\[
\left\{\bx \mapsto \sum_{i=1}^n \kernel(\bx_i,\bx)\, :\, \bx_i \in \cK,\,  n \in \mathbb{N}\right\} \subset \R^\cK.
\]
This induces the reproducing property, where $\langle f, \kernel(\bx,\cdot) \rangle_\Hil = f(\bx)$ for $f \in \Hil$. 
As a consequence of the reproducing property, we have $\langle \varphi(\bx), \varphi(\by) \rangle_\Hil = \kernel(\bx,\by) $.
Hence, we may think of $\kernel$ as a ``nonlinear inner product''.
\item The evaluation of $\kernel(\bx,\by)$ can often be made computationally cheap even though it is the inner product of elements $\varphi(\bx)$ and $\varphi(\by)$ in (the often infinite-dimensional) $\Hil$. For example, arguably the most popular kernel is the radial basis function (RBF) kernel, $\kernel(\bx,\by)=\exp(-\sigma |\bx-\by|^2)$, which is the inner product of an infinite-dimensional map $\varphi(\bx)$. 
That an inner product of feature maps $\varphi$ is computationally cheap, is often referred to as a ``kernel trick''.
\item Many learning algorithms can be reformulated so that they do not require the evaluation of $\varphi(\bx)$ but only the computation of the inner product $\kernel(\bx,\by)=\langle\varphi(\bx),\varphi(\by)\rangle$.
\end{enumerate}
To summarize, for a given dataset $\mathcal{D}=\{\bx_1,\ldots,\bx_N\} \subset \cX$, kernelization allows us to work with highly non-linear feature maps $\varphi(\bx)$ taking values in an infinite dimensional space $\Hil$ by only evaluating the Gram matrix $(\kernel(\bx_i,\bx_j))_{i,j \in N}$.

\begin{remark}
Kernelization leverages duality for a complexity trade-off: while we do not have to worry about the dimension of $\Hil$ in the computation of kernels, we must compute the entire Gram martrix which scales quadratically in the number of samples $N$.
This is a direct consequence of the duality that underlies kernelization.
\end{remark}
To gain some intuition we revisit and kernelize the classical monomial feature map from Section \ref{sec:monomials},
\[
\Phi_\Mon: \R^d \to \prod_{m \ge 0} (\R^d)^{\otimes m},\bx \mapsto \left(1,\bx, \frac{\bx^{\otimes 2}}{2},\ldots\right).
\]
Further, we denote with 
\[
\Phi_{\Mon, :M} = \R^d \to \prod_{m \ge 0} (\R^d)^{\otimes m}, \bx \mapsto \left(1,\bx, \frac{\bx^{\otimes 2}}{2},\ldots,\frac{\bx^{\otimes M}}{M!},0,0,\ldots\right)
\]
the same map, but truncated at level $M$; here $0 \in (\R^d)^{\otimes M}$ denotes the degree $M$ tensor that has as entries all zeroes.
The co-domain of $\Phi_{\Mon}$ and $\Phi_{\Mon, :M}$ is the same, but the latter has only finitely many non-zero entries. 
We refer to $\Phi_{\Mon, :M}$ as truncated at $M$. 
\paragraph{Inner Product of Tensors.}
There is a natural inner product on $\R^d$ by setting
\[
\langle \bs_m,\bt_m \rangle_{m}\coloneqq \sum_{i_1, \ldots, i_m =1}^d \bs^{i_1, \ldots, i_m}_m \cdot \bt^{i_1, \ldots, i_m}_m,
\]
where $\bs_m, \bt_m \in (\R^d)^{\otimes m}$. This inner product can be extended to the subset of $\prod (\R^d)^{\otimes m}$ as
\[
\langle \bs, \bt \rangle \coloneqq \sum_{m=0}^\infty \langle \bs_m, \bt_m\rangle_{m},
\]
for which the above sum converges;
where $\bs = (\bs_0, \bs_1, \ldots), \bt = (\bt_0, \bt_1, \ldots) \in \prod (\R^d)^{\otimes m}$. Hence, if we denote by $\be_i \in \R^d$ the vector that is all zeros except with a $1$ at entry $i$, then an orthogonal basis of $\prod (\R^d)^{\otimes m}$ is given by
\[
\{\be_{i_1} \otimes \cdots \otimes \be_{i_m} \, :\, i_1,\ldots,i_m \in \{1,\ldots,d\}^m,\,  m \in \mathbb{N}\}.
\]
The following is an elementary but key property that we use throughout
\begin{align}\label{eq:product of tensors}
\langle \bx_1 \otimes \cdots \otimes \bx_m, \by_1 \otimes \cdots \otimes \by_m \rangle = \langle \bx_1,\by_1\rangle \cdots \langle \bx_m,\by_m \rangle \text{ for all }\bx_1,\ldots,\bx_m,\by_1,\ldots,\by_m \in \R^d.
\end{align}
This already hints at how we can reduce the computational complexity. A naive evaluation of the left-hand side requires us to store the $d^m$ coordinates of the tensors $\bx_1 \otimes \cdots \otimes \bx_m$ and $\by_1 \otimes \cdots \otimes \by_m $ and then compute the inner product, hence requires $O(d^m)$ memory. 
In contrast, evaluating the right hand side requires just $O(dm)$ memory.

\paragraph{Toy Example 1: Avoiding the Computational Complexity.}
Let $\cX \subset \R^d$ and define
\[\kernel_{\Mon, :M}^0:\cX \times \cX \to \R, \quad \kernel_\Mon^0(\bx,\by) = \langle \Phi_{\Mon, :M}(\bx), \Phi_{\Mon, :M}(\by)\rangle\]
We use the $0$ to denote that it is the ``0''-th version since we going to improve it in the next example. 
But for now note that using \eqref{eq:product of tensors} we get 
\begin{align}\label{eq:kernel trick monomial 1}
\kernel_{\Mon, :M}^0(\bx,\by)=\sum_{m=0}^M \frac{\langle \bx^{\otimes m}, \by^{\otimes m}\rangle}{(m!)^2} = \sum_{m=0}^M \frac{\langle \bx, \by\rangle^m}{(m!)^2}.
\end{align}
which already shows that the computational cost is linear in $d$. 
We can make this even more efficient, by recalling \emph{Horner's scheme}: the formula \eqref{eq:kernel trick monomial 1} is a polynomial in $\langle \bx, \by\rangle$ which can be efficiently computed as
\begin{align}\label{eq:horner}
    k_{\Mon, :M}^0(\bx,\by) = 1 + \langle \bx, \by\rangle \left( 1 + \frac{\langle \bx, \by\rangle}{2^2}\left(1 + \ldots \frac{\langle \bx, \by\rangle}{(M-1)^2}\left( 1 + \frac{\langle \bx,\by\rangle}{M^2}\right)\right)\right),
\end{align}
This ``Horner-type'' evaluation has three advantages: firstly, it  requires only $M$ additions and $M$ multiplications in contrast to the naive evaluation of \eqref{eq:kernel trick monomial 1}; secondly, it is numerically stable, again in contrast to \eqref{eq:kernel trick monomial 1} which adds powers of scalars which live on different scales and makes floating point arithmetic hard; and thirdly, it allows us to replace the inner product $\langle \bx, \by \rangle$ with kernel $\kernel(\bx,\by)$ which we describe in the toy example below.

\paragraph{Toy Example 2: Composing Non-Linearities.}
We now have an efficient way to evaluate $\kernel_{\Mon}^0$ but it is well-known that this kernel does not perform well in practice \cite{scholkopf_learning_2002-1}.
The reason is the aforementioned fact, that although monomials have nice  theoretical properties, they suffer from a lack of expressivity; in practice they are outperformed by other kernels on $\R^d$ that are inner products of other non-linearities, e.g.~the RBF kernel.

A natural way to address this is to first apply a non-linear map \[\varphi: \R^d \to \Hil\] and only subsequently compute the monomials in $\Hil$, that is we consider 
\begin{align}\label{eq:monomial nonlinear}
\Phi_{\Mon, :M}\left( \varphi(\bx) \right)\equiv \left(1,\varphi(\bx),\frac{\varphi(\bx)^{\otimes 2}}{2!}, \ldots, \frac{\varphi(\bx)^{\otimes M}}{M!},0,0,\ldots\right).
\end{align}
This preserves the algebraic structure but additionally allows for a far more expressive feature map since $\varphi$ adds (infinitely-many) additional non-linearities via $\varphi$.
If we take this underlying map to be the identity, $\varphi = \operatorname{id}$, then \eqref{eq:monomial nonlinear} coincides with \eqref{eq:monomial}; but in general we can use arbitrary non-linearities $\varphi$.
Unfortunately, a direct use of the map $\Phi_{\Mon}\circ \varphi$ is impossible in practice since we cannot directly evaluate $\varphi$ when $\Hil$ is infinite-dimensional, even for $M=2$. 
The key remark now is that if we take $\varphi(\bx) = \kernel(\bx,\cdot)$ to be the feature map of a kernel $\kernel:\R^d \times \R^d \to \R$ that has $\Hil$ as RKHS, then \emph{the inner product }
\begin{align}
    \kernel_{\Mon, :M}(\bx,\by) \coloneqq \langle \Phi_{\Mon, :M} \circ \varphi(\bx), \Phi_{\Mon, :M} \circ \varphi(\by) \rangle = \langle \Phi_{\Mon, :M} \circ \kernel(\bx,\cdot), \Phi_{\Mon, :M} \circ \kernel(\by,\cdot) \rangle 
\end{align}
\emph{can be computed exactly}! 
In fact, it just amounts to replacing the Euclidean inner product $\langle \bx, \by \rangle $ in the formula \eqref{eq:kernel trick monomial 1} and ~\eqref{eq:horner} with $\kernel(\bx,\by)$. 
Note the kernel $\kernel_{\Mon, :M}$ depends on $M$ and $\varphi$; the optimal selection of these is important and we discuss this further in Section \ref{sec:hyperparameter}.

While the calculation itself is elementary, it is remarkable that this can be done despite the genuine infinite-dimensional co-domain of $\Phi_{\Mon, :M} \circ \varphi$; such a trade-off of losing direct access to the feature map but being able to compute an inner product is often called a kernel trick.
Thus the kernel $\kernel_{\Mon, :M}$ inherits the nice algebraic properties of monomials and also the power to access an infinite-dimensional $\Hil$ via $\kernel$.

\section{The Signature Kernel}\label{sec:signature kernel}
We now apply the same kernelization approach as in Section \ref{sec:kernel learning} to the signature feature map $\signature$ in order to obtain a computable kernel equipped with further non-linearities.
We begin by considering a kernel
\begin{align}
\kernel : \R^d \times \R^d \to \R, 
\end{align}
on the state space of our paths; henceforth we refer to $\kernel$ as the \emph{static kernel}. Instead of lifting elements of $\R^d$ into the RKHS $\Hil$ of $\kernel$, we use it to map the path $\bx$,
\[
t \mapsto \bx(t) \in \R^d,
\]
into a path $\kernel_\bx$,
\[
t \mapsto \kernel_\bx(t) \coloneqq \kernel(\bx(t),\cdot) \in \Hil,
\]
that now evolves in $\Hil$.
For a generic static kernel $\kernel:\cX \times \cX \to \R$, its RKHS $\Hil$ will be infinite-dimensional, hence the path $\kernel_\bx:t\mapsto \kernel_\bx(t)$ evolves in an infinite-dimensional space. 
Our aim is to use the signature of the path $\kernel_\bx$, 
\begin{align}\label{eq:signature of kernel lift}
\signature(\kernel_\bx)= \left(1, \int d \kernel_\bx, \int d \kernel_\bx^{\otimes 2},\ldots \right),
\end{align}
and its truncated version
\begin{align}
    \signatureTrunc{M}(\kernel_\bx) \coloneqq \left(1, \int d \kernel_\bx, \int d \kernel_\bx^{\otimes 2},\ldots, \int d \kernel_\bx^{\otimes M},0,0,\ldots\right)
\end{align}
as a way to represent the path $\bx$. Once again, the intuition is that \[\signature(\kernel_\bx) \in \prod_{m \ge 0} \Hil^{\otimes m}\] is much more expressive than $\signature(\bx)$ since it computes iterated integrals of the path after the non-linearities of $\kernel$ have been applied so we will have a much richer set of coordinates (of iterated integrals) to choose from when $\kernel$ is non-linear and the RKHS of $\kernel$ is very large.

Again the bad news is, of course, that even for $m=2$ storing $\int d \kernel_\bx^{\otimes 2} \in \Hil^{\otimes 2}$ exactly is in general infeasible for the interesting case when $\Hil$ is infinite-dimensional.
The main insight in \cite{kiraly_kernels_2019} is that, perhaps surprisingly, the inner product 
\begin{align}\label{eq:sigkernel}
\kernel_{\Sig, :M}(\bx,\by) \coloneqq \langle \signatureTrunc{M}(\kernel_\bx), \signatureTrunc{M}(\kernel_\by) \rangle
\end{align}
can be computed exactly when $M<\infty$ for piecewise linear paths $\bx,\by$ and, perhaps less surprisingly, inherits many of the attractive properties of the signature features.

\begin{theorem}\label{thm:sigkernel}
    Let $\kernel:\R^d\times \R^d \to \R$ be a kernel and let $M \in \mathbb{N}$ and let
    \begin{align}
        \kernel_{\Sig, :M}: \pathsOneVar \times \pathsOneVar \to \R
    \end{align}
    be as defined in \eqref{eq:sigkernel}. 
    Then $\kernel_{\Sig, :M}(\bx,\by)$ can be evaluated exactly for piecewise linear paths $\bx=(\bx_1,\ldots,\bx_{L_\bx})$, $\by=(\by_1,\ldots,\by_{L_\by})$ in $O(M^3L^2)$ computational steps where $L=\max(L_\bx,L_\by)$.
\end{theorem}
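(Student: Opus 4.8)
The plan is to peel off the computation level by level and then reduce every inner product to the scalar Gram values of the static kernel. Since the graded pieces are orthogonal, I would first write
\[
\kernel_{\Sig, :M}(\bx,\by) = \sum_{m=0}^M \langle \signatureLevel{m}(\kernel_\bx), \signatureLevel{m}(\kernel_\by)\rangle_{\Hil^{\otimes m}},
\]
so that it suffices to compute each fixed-level term. Writing $\signatureLevel{m}(\kernel_\bx)$ as an $m$-fold iterated integral and applying the factorization of tensor inner products \eqref{eq:product of tensors} slot by slot, each level-$m$ term becomes an integral over two simplices of the product $\prod_{p=1}^m \langle d\kernel_\bx(s_p), d\kernel_\by(t_p)\rangle_\Hil$. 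The crucial observation is the reproducing property $\langle \kernel_\bx(s), \kernel_\by(t)\rangle_\Hil = \kernel(\bx(s),\by(t))$, so that each infinitesimal inner product, and hence the entire expression, depends on $\bx$ and $\by$ only through the scalar field $(s,t)\mapsto \kernel(\bx(s),\by(t))$, never through the infinite-dimensional vectors $\kernel_\bx(s)$ themselves. This is the kernel trick that makes the problem finite-dimensional.

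Next I would specialize to piecewise linear paths and collapse the integrals onto the grid. Treating the lifted paths as piecewise linear between the sampled points $\kernel_\bx(i) = \kernel(\bx_i,\cdot)$, the only surviving data is the increment Gram matrix
\[
A_{ij} := \langle \delta\kernel_{\bx,i}, \delta\kernel_{\by,j}\rangle_\Hil = \kernel(\bx_i,\by_j) - \kernel(\bx_{i-1},\by_j) - \kernel(\bx_i,\by_{j-1}) + \kernel(\bx_{i-1},\by_{j-1}),
\]
where $\delta\kernel_{\bx,i} := \kernel_\bx(i) - \kernel_\bx(i-1)$, so that $A$ is computable from $O(L^2)$ evaluations of the static kernel. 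Using the multiplicativity of the signature (Chen's identity) to factor each path signature into a tensor product of segment exponentials $\exp_\otimes(\delta\kernel_{\bx,i})$, the level-$m$ inner product reduces to a finite combinatorial sum over weakly increasing index tuples $i_1 \le \cdots \le i_m$ and $j_1 \le \cdots \le j_m$ of $\prod_{p=1}^m A_{i_p j_p}$, weighted by the reciprocal factorials coming from the exponentials on repeated (within-segment) indices. At this stage the object is entirely explicit in terms of $A$.

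Finally I would organize this sum into a dynamic-programming recursion and count operations. The recursion processes the $O(L^2)$ pairs of segment indices $(i,j)$ while maintaining, for each pair, the partial sums indexed by level $m \le M$, and propagates cumulative sums in both time directions so that the weak-inequality constraints are handled incrementally. The factorial-weighted within-segment repeats require, at each pair and each level, an inner loop over the number of repeated factors, and it is this nested level/repeat bookkeeping that produces the cubic factor $M^3$, while the two time directions produce the $L^2$; assembling these gives the claimed $O(M^3L^2)$ bound and recovers the algorithm of \cite{kiraly_kernels_2019}.

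The reductions in the first two paragraphs are essentially forced by the tools already developed, so I expect them to be routine. The main obstacle is the last step: correctly accounting for the diagonal, within-segment contributions, that is, the terms in which several integration variables fall in the same linear piece, since these are exactly what distinguishes the continuous signature from a naive strictly ordered sum and what drives both the combinatorial weights and the precise $M^3$ dependence. Designing a recursion that captures these terms while still reusing cumulative sums across segment pairs, and then verifying the operation count, is where the real work lies.
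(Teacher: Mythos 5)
Your proposal follows essentially the same route as the paper: lift the sequence via the static kernel, use Chen's identity to write the signature of the piecewise linear path as a tensor product of segment exponentials $\Phi_\Mon(\delta_i\kernel_\bx)$, reduce all inner products to the increment Gram matrix $\nabla^{\kernel}_{i,j}$ via the reproducing property, obtain the factorial-weighted sum over weakly increasing index tuples, and evaluate it by a Horner-type dynamic-programming recursion whose handling of within-segment repetitions (the paper's constants $C_m(\bi_m,\bj_m)$) yields the $O(M^3L^2)$ count. This matches the paper's argument in structure and in every key step, including your identification of the repeat bookkeeping as the source of the cubic factor in $M$.
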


Even in the case of $M = \infty$, when $\sigkernel(\bx,\by)\equiv \langle \signatureTrunc{\infty}(\bx), \signatureTrunc{\infty}(\by) \rangle \equiv \langle \signature(\bx), \signature(\by) \rangle $, good approximations of $\sigkernel(\bx,\by)$ exist for piecewise linear paths which we discuss further in the next section.

\subsection{Approximations}
The above Theorem \ref{thm:sigkernel} provides a powerful method to compute signature kernels and allows for many applications.
However, besides the exact evaluation of $\kernel_{\Sig, :M}$, approximations are important for various reasons.
\begin{description}
    \item[Gram Matrix Approximations.] In a machine learning context, the computational bottleneck is not just the evaluation of a single $\kernel_{\Sig, :M}(\bx,\by)$ but the computation of a Gram matrix 
    \[
    G = (\kernel_{\Sig, :M}(\bx,\by))_{\bx,\by \in \mathcal{D}}
    \]
    where $\mathcal{D} \subset \seq$ is the finite set of data we are given.
    However, naively evaluating each of the $|\mathcal{D}|^2$ entries of this matrix is typically not feasible since the datasets can be very large.
    General kernel learning approaches such as the Nystr\"om-method can be applied and replace the original Gram matrix by an approximation with a low-rank matrix which can have linear complexity in $|\mathcal{D}|$.
    Similarly, in a Gaussian process (GP) context, one not only needs to approximate $G$ but also calculate its inverse as part of the Bayesian update rule. 
    This scales even worse than quadratic in the number of data points, and again some generic GP methods have been developed to deal with this.

    \item[Untruncated Signature Kernel Approximations.]
    We regard the truncation level $M \in \N_{\infty}$ as a hyperparameter that should be tuned by the data. While Theorem \ref{thm:sigkernel} gives an exact algorithm for finite $M$ (in practice, the algoritm of Theorem \ref{thm:sigkernel} is feasible for $M<10$), one would like to know how well we can approximate the signature kernel with a large $M$ or even $M=\infty$. 
    
    \item[Discrete to Continuous Time Approximation.]
    Sometimes the data is genuinely a discrete-time sequence, but sometimes the underlying object is a continuous-time path of which we are only given discrete-time samples. 
    The signature kernel is well-defined for continuous-time paths, hence it can be useful to quantify the time-discretization error.
   These limiting paths are typically not of bounded variation, for example Brownian motion is only of finite $p$-variation for $p>2$, so some care has to be taken.
    
\end{description}
All three of the above points address different issues, and the following discusses them in more detail. 

\paragraph{Gram-Matrix approximations.}
A general issue in kernel learning is that the computation of the Gram matrix scales quadratically in the number $N=|\mathcal{D}|$ of data points $\mathcal{D}$. 
In particular, each data point $\bx \in \mathcal{D}$ corresponds to a sequence, hence we would require $N^2$ evaluations of the signature kernel. 
Even if a single evaluation of $\kernel_{\Sig, :M}$ is cheap, this quadratic complexity becomes quickly prohibitive. 
A generic method that applies to any kernel $\kernel$ is the Nystr\"om approximation, which in its simplest form, approximates a general kernel Gram matrix 
\[G=(\kernel(\bx_i,\by_j))_{i,j=1,\ldots,N}
\]
by sampling uniformly at random a set $J$ a subset of $\{1,\ldots,n\}$ of size $r$ and define
\[
\tilde G = C W^{-1} C^\top
\]
where $C= (\kernel(\bx_i,\bx_j))_{i=1,\ldots,N,\, j \in J}$ and $W=(\kernel(\bx_i,\bx_j))_{i,j \in J}$. 
Note that $\tilde G$ is a random matrix of rank $r$, hence the computational cost scales as $r^2$ which can be huge reduction if $r\ll N$. 
Besides uniform subsampling of columns, more sophisticated Nystr\"om approximations schemes are available \cite{drineas2005nystrom}.
All these generic method can be applied to the signature kernel: selecting $J$ amounts to selecting a subset of the sequences in our dataset. 
However, this only addresses the complexity in the number $N$ of sequences, but not the other computational bottleneck which is the quadratic complexity in sequence length $L$. 
A simple way to address the latter is to subsample each sequence, which again, can work but is a brute force approach.
A more elaborate option is to also apply the Nystr\"om approximation at the level of time-points; \cite{kiraly_kernels_2019}[Algorithm 5] shows that these two Nystr\"om approximations can be done simultaneously: informally, one does not only reweigh the importance of sequences but also of time-points within these sequences.

Another situation where the cost of the matrix calculations become quickly too prohibitive is for Gaussian processes. 
We discuss Gaussian processes with signature kernels in Section \ref{sec:applications} and again generic GP approximation methods apply. 
However, we can further reduce the complexity of our algorithms by exploiting the specific structure of the signature kernel. 
This can be achieved by selecting special tensors~\cite{toth_bayesian_2020} or by using diagonalization arguments~\cite{lemercier2021siggpde}.

\paragraph{Truncation Approximation.} 
The signature tensors decay factorially (see Table \ref{table:monomials}), and this allows us to quantify the difference between truncated and untruncated signatures as follows.
\begin{proposition}
    Let $\signatureTrunc{\infty}=\signature$ be the untruncated signature map and let $M> 0$. 
    Then, for any $\bx \in \pathsOneVar$, we have 
    \begin{align}\label{eq:truncate error}
        \|\signatureTrunc{\infty}(\kernel_{\bx}) - \signatureTrunc{M}(\kernel_{\bx})\| \leq \frac{e^{\|\kernel_{\bx}\|_1} \|\kernel_{\bx}\|^{M+1}_1}{(M+1)!}.
    \end{align}
\end{proposition}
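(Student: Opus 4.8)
The plan is to recognize that the object being measured is simply the tail of the signature series. Because $\signatureTrunc{M}(\kernel_\bx)$ agrees with $\signatureTrunc{\infty}(\kernel_\bx) = \signature(\kernel_\bx)$ in every degree $0,1,\ldots,M$ and is zero thereafter, the difference has vanishing low-order components,
\[
\signatureTrunc{\infty}(\kernel_\bx) - \signatureTrunc{M}(\kernel_\bx) = \left(0,\ldots,0,\signatureLevel{M+1}(\kernel_\bx),\signatureLevel{M+2}(\kernel_\bx),\ldots\right).
\]
Since the norm on $\prod_{m\ge 0}\Hil^{\otimes m}$ is the $\ell^2$ norm of the sequence of degreewise norms, and the $\ell^2$ norm is dominated by the $\ell^1$ norm, I would first reduce the claim to a bound on the sum of the individual level norms:
\[
\|\signatureTrunc{\infty}(\kernel_\bx) - \signatureTrunc{M}(\kernel_\bx)\| = \left(\sum_{m=M+1}^{\infty} \|\signatureLevel{m}(\kernel_\bx)\|^2\right)^{1/2} \le \sum_{m=M+1}^{\infty} \|\signatureLevel{m}(\kernel_\bx)\|.
\]

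Next I would invoke the factorial decay of the signature terms (the \textbf{Factorial Decay} row of Table~\ref{table:monomials}), applied now to the $\Hil$-valued bounded-variation path $\kernel_\bx$ rather than to an $\R^d$-valued path. Writing $\signatureLevel{m}(\kernel_\bx) = \int_{0<t_1<\cdots<t_m<T} d\kernel_\bx(t_1)\otimes\cdots\otimes d\kernel_\bx(t_m)$ and using multiplicativity of the tensor norm together with the fact that the ordered simplex has volume $1/m!$ relative to the cube (under the total-variation measure $\|d\kernel_\bx\|$), one gets
\[
\|\signatureLevel{m}(\kernel_\bx)\| \le \frac{1}{m!}\left(\int_0^T \|d\kernel_\bx\|\right)^{m} = \frac{\|\kernel_\bx\|_1^{\,m}}{m!}.
\]
Setting $a \coloneqq \|\kernel_\bx\|_1$, it then remains to estimate the exponential tail. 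I would factor out the leading term and compare with the full series:
\[
\sum_{m=M+1}^{\infty} \frac{a^m}{m!} = \frac{a^{M+1}}{(M+1)!}\sum_{k=0}^{\infty} \frac{(M+1)!}{(M+1+k)!}\,a^k \le \frac{a^{M+1}}{(M+1)!}\sum_{k=0}^{\infty} \frac{a^k}{k!} = \frac{e^{a}\,a^{M+1}}{(M+1)!},
\]
where the middle inequality uses $\frac{(M+1)!}{(M+1+k)!} = \prod_{j=1}^{k}(M+1+j)^{-1} \le \prod_{j=1}^{k} j^{-1} = \frac{1}{k!}$. Substituting $a = \|\kernel_\bx\|_1$ yields exactly \eqref{eq:truncate error}.

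The only genuine subtlety is the transition from $\R^d$-valued paths to the Hilbert-space lift $\kernel_\bx$: one must know that $\kernel_\bx$ is itself of bounded variation in $\Hil$, so that the iterated integrals and the quantity $\|\kernel_\bx\|_1$ are well defined, and that the factorial estimate transfers to this infinite-dimensional state space. I expect this to pose no real obstacle, since the proof of the factorial decay never uses finite-dimensionality — it relies only on the simplex structure of the domain and the submultiplicativity $\|u\otimes v\| = \|u\|\,\|v\|$, both of which hold verbatim in any Hilbert space. Thus the entire argument amounts to combining the standard factorial decay with an elementary tail bound on the exponential series.
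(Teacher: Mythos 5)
Your proof is correct and is essentially the argument the paper intends: the paper gives no in-line proof (it defers to \cite[Lemma 46]{chevyrev_signature_2022}), but it explicitly frames the result as a consequence of the factorial decay of signature tensors, and your chain — dominating the $\ell^2$-type norm on $\prod_{m\ge 0}\Hil^{\otimes m}$ by the $\ell^1$ sum, the bound $\|\signatureLevel{m}(\kernel_\bx)\|\le \|\kernel_\bx\|_1^m/m!$ (valid in any Hilbert space, as you note), and the elementary tail estimate $\sum_{m>M} a^m/m!\le e^a a^{M+1}/(M+1)!$ — is exactly that standard route. No gaps; your write-up simply fills in the details the paper outsources to the reference.
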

A proof can be found in~\cite[Lemma 46]{chevyrev_signature_2022}.
Despite the nice factorial decay, we emphasize that the variation norm of the sequence appears. 
Furthermore, we can obtain an analogous bound on the truncation error for the kernel by applying Cauchy-Schwarz.
\begin{proposition}
    Let $\kernel_{\Sig,:\infty}$ be the untruncated signature kernel and $\kernel_{\Sig,:M}$ be the signature kernel truncated at $M> 0$. Then, for any $\bx, \by \in \pathsOneVar$, we have
    \begin{align*}
        |\kernel_{\Sig,: \infty}(\bx, \by) - \kernel_{\Sig,:M}(\bx, \by)| \leq \frac{e^{C} C^{M+1}}{(M+1)!},
    \end{align*}
    where $C = \max\{ \|\kernel_{\bx}\|_1, \|\kernel_{\by}\|_1\}$.
\end{proposition}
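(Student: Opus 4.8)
The plan is to reduce the statement to the feature-map truncation bound of the previous proposition by a single application of Cauchy--Schwarz, after first rewriting the kernel difference in a form in which the low-degree contributions cancel exactly. First I would unfold the definitions, writing $\kernel_{\Sig,:\infty}(\bx,\by)=\langle \signatureTrunc{\infty}(\kernel_{\bx}),\signatureTrunc{\infty}(\kernel_{\by})\rangle$ and $\kernel_{\Sig,:M}(\bx,\by)=\langle \signatureTrunc{M}(\kernel_{\bx}),\signatureTrunc{M}(\kernel_{\by})\rangle$, where $\langle\cdot,\cdot\rangle$ is the graded inner product on $\prod_{m\ge 0}\Hil^{\otimes m}$ from Section~\ref{sec:kernel learning}.

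The key observation is that truncation at level $M$ is exactly the orthogonal projection $P_M$ onto the subspace $\bigoplus_{m\le M}\Hil^{\otimes m}$ of low-degree tensors: we have $\signatureTrunc{M}(\kernel_{\bx})=P_M\signatureTrunc{\infty}(\kernel_{\bx})$, and the grading makes tensors of distinct degree mutually orthogonal. Setting $r_{\bx}\coloneqq \signatureTrunc{\infty}(\kernel_{\bx})-\signatureTrunc{M}(\kernel_{\bx})=(I-P_M)\signatureTrunc{\infty}(\kernel_{\bx})$ and likewise $r_{\by}$, the mixed terms $\langle P_M\,\cdot\,,(I-P_M)\,\cdot\,\rangle$ vanish, so the difference collapses to the clean identity
\[
\kernel_{\Sig,:\infty}(\bx,\by)-\kernel_{\Sig,:M}(\bx,\by)=\langle r_{\bx},r_{\by}\rangle=\sum_{m>M}\langle \signatureLevel{m}(\kernel_{\bx}),\signatureLevel{m}(\kernel_{\by})\rangle.
\]
Next I would apply Cauchy--Schwarz, bounding $|\langle r_{\bx},r_{\by}\rangle|\le \|r_{\bx}\|\,\|r_{\by}\|$, and then invoke the previous proposition, which controls precisely these tail norms, giving $\|r_{\bx}\|\le e^{\|\kernel_{\bx}\|_1}\|\kernel_{\bx}\|_1^{M+1}/(M+1)!$ and the analogue for $r_{\by}$. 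Since $s\mapsto e^{s}s^{M+1}/(M+1)!$ is increasing on $[0,\infty)$, replacing $\|\kernel_{\bx}\|_1,\|\kernel_{\by}\|_1$ by $C=\max\{\|\kernel_{\bx}\|_1,\|\kernel_{\by}\|_1\}$ bounds each factor by $e^{C}C^{M+1}/(M+1)!$.

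The step needing the most care is the orthogonality identity: one must verify that the graded inner product really makes $\signatureTrunc{M}$ an orthogonal projection, so the mixed low-/high-degree terms drop out, and that the series involved converge absolutely so that the rearrangement is legitimate; the factorial decay $\|\signatureLevel{m}(\kernel_{\bx})\|\le \|\kernel_{\bx}\|_1^m/m!$ secures the latter. I would also flag a genuine subtlety: the argument above actually yields the \emph{product} $\|r_{\bx}\|\,\|r_{\by}\|\le \big(e^{C}C^{M+1}/(M+1)!\big)^2$, i.e.\ the square of the quoted bound. This is sharper than, and in particular implies, the stated single-factor inequality exactly when $e^{C}C^{M+1}/(M+1)!\le 1$ --- which is the small-error regime in which the estimate is meaningful --- so it is this observation, rather than any further computation, that is needed to land precisely on the form stated in the proposition.
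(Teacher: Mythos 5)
Your proof takes exactly the same route as the paper's: the paper's entire argument for this proposition is to combine the preceding feature-map truncation bound with Cauchy--Schwarz, and your orthogonality decomposition, the tail identity $\kernel_{\Sig,:\infty}(\bx,\by)-\kernel_{\Sig,:M}(\bx,\by)=\langle r_{\bx},r_{\by}\rangle$, and the monotonicity step are a correct and more careful write-up of precisely that argument.

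The subtlety you flag at the end, however, deserves to be stated more forcefully: it is not a cosmetic mismatch but a genuine flaw in the proposition as stated. What the Cauchy--Schwarz argument proves is the product bound
\begin{equation*}
\left|\kernel_{\Sig,:\infty}(\bx,\by)-\kernel_{\Sig,:M}(\bx,\by)\right| \;\le\; \frac{e^{\|\kernel_{\bx}\|_1}\,\|\kernel_{\bx}\|_1^{M+1}}{(M+1)!}\cdot\frac{e^{\|\kernel_{\by}\|_1}\,\|\kernel_{\by}\|_1^{M+1}}{(M+1)!} \;\le\; B^2, \qquad B\coloneqq \frac{e^{C}C^{M+1}}{(M+1)!},
\end{equation*}
and $B^2\le B$ only in the regime $B\le 1$. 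Outside that regime the single-factor bound is not merely out of reach of this argument; it is false. Take $\cX=\R$ with the linear static kernel and $\bx=\by$ the straight line of length $C$ (so $\|\kernel_{\bx}\|_1=C$); then every signature level equals $C^m/m!$ and the left-hand side is $\sum_{m>M}C^{2m}/(m!)^2$. For $C=10$ and $M=1$ this sum is roughly $4\times 10^{7}$, while $B=e^{10}\cdot 10^{2}/2!\approx 1.1\times 10^{6}$. So the correct general statement is the product form you derived (equivalently, the stated form with $e^{2C}$ in place of $e^{C}$, or the stated form under the additional hypothesis $B\le 1$, which is the regime in which one would use the estimate anyway). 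In short: your argument is the paper's argument, it is correct, and the discrepancy you noticed is an error in the paper's statement rather than a gap in your proof; I would only ask you to replace the claim that your observation ``lands precisely on the form stated'' with the explicit counterexample above, since no argument can recover the stated form in general.
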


The above quantifies that by choosing a sufficiently high truncation level $M$, we obtain a good approximation to $\kernel_{\Sig,\infty}$. 
In practice, $M=10$ is a feasible computation and since $10! \approx 10^6$, the contribution of the tail in the sum is negligible. 
However, we draw attention to the fact that the 1-variation of the sequence appears as a constant. 
Hence, the level $M$ at which the factorial decay kicks in is determined by the path length. The situation is analogous to computing the matrix exponential where the matrix norm determines this truncation level, see Figure 1 in \cite{moler2003nineteen}.

Instead of using an exact algorithm for finite $M$ and regarding $\kernel_{\Sig,:M}$ as an approximation to $\kernel_{\Sig,:\infty}$, one can also directly approximate $\kernel_{\Sig,:\infty}$ by numerically solving a PDE~\cite{salvi_signature_2021-1}.
In this case, the approximation error to $\kernel_{\Sig,:\infty}$ is due to the discretization of the PDE solver. In particular, the error is quantified in terms of the step size; see \cite{salvi_signature_2021-1} for further details.

\paragraph{Discrete Time Approximation.}
While we have formulated the kernel in terms of continuous paths, this immediately provides a kernel for discrete sequences via the piecewise linear interpolation from Equation~\ref{eq:pwl_interpolation}. Given a discrete sequence $\bx = (\bx_0, \ldots, \bx_L) \in \seq$ and the static kernel, we consider the induced continuous path $\kernel_{\bx}:[0,L] \to H$, defined by
\begin{equation} \label{eq:pwl_kernel_approximation}
    \kernel_{\bx}(t) = (1-t)\kernel_{\bx_{i-1}} + t\kernel_{\bx_i}, \quad \quad t\in [i-1, 1].
\end{equation}
Then, we can use the algorithms from~\cite{kiraly_kernels_2019} with the complexity bound from Theorem \ref{thm:sigkernel} to efficiently compute signature kernels for discrete data.

We can also consider sequences as discretizations of continuous paths. Given a continuous path $\bx: [0,T] \rightarrow \R^d$, we consider the discretization $\bx^\pi \in \seq$ with respect to a partition $\pi = (t_i)_{i=0}^L$ of the interval $[0,T]$, where $0 = t_0 < \ldots < t_L = T$. Applying the above interpolation to $\bx^\pi$, we obtain an approximate continuous path $\kernel_{\bx}^\pi$ induced by $\bx^\pi$. Thus, we obtain an approximation $\signatureTrunc{M}(\kernel_\bx^\pi)$, which converges to the signature $\signature(\kernel_\bx)$, as $M\to \infty$ and as the mesh of the partition $\pi$, $\sup_{t_i \in \pi} |t_i-t_{i-1}|$, vanishes. 
The convergence rate is given by the following result; further estimates can be found in \cite{kiraly_kernels_2019}.

\begin{proposition}[\cite{kiraly_kernels_2019}]
    Let $\bx \in \pathsOneVar$ with domain $[0,T]$ and suppose $\pi = (t_i)_{i=0}^L$ be a partition of $[0,T]$. Then,
    \begin{equation}
        \|\signatureTrunc{\infty}(\kernel_{\bx}) - \signatureTrunc{M}(\kernel_{\bx}^\pi)\| \leq e^{\|\kernel_{\bx}\|_1}\|\kernel_{\bx}\|_1  \cdot \max_{i=1, \ldots, L} \|\kernel_{\bx}|_{[t_{i-1}, t_i]}\|_1.
    \end{equation}
\end{proposition}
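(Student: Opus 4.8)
The plan is to split the error, via the triangle inequality, into a discretization part $\|\signatureTrunc{\infty}(\kernel_{\bx})-\signatureTrunc{\infty}(\kernel_{\bx}^\pi)\|$ and a truncation part $\|\signatureTrunc{\infty}(\kernel_{\bx}^\pi)-\signatureTrunc{M}(\kernel_{\bx}^\pi)\|$. The truncation part is the signature tail of $\kernel_{\bx}^\pi$ beyond level $M$, controlled by the preceding truncation Proposition through $e^{\|\kernel_{\bx}^\pi\|_1}\|\kernel_{\bx}^\pi\|_1^{M+1}/(M+1)!$; since the piecewise-linear interpolant has the same increments as $\kernel_{\bx}$ we have $\|\kernel_{\bx}^\pi\|_1\le\|\kernel_{\bx}\|_1$, so this tail decays factorially in $M$ and drops out in the limit $M\to\infty$. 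The genuine work is the discretization estimate, for which the displayed right-hand side is the target. Throughout I would pass to the graded norm $\|\bs\|_{\oplus}\coloneqq\sum_m\|\bs_m\|$, which dominates the Hilbert-space norm of the statement and is submultiplicative, $\|\bs\otimes\bt\|_{\oplus}\le\|\bs\|_{\oplus}\|\bt\|_{\oplus}$ under the tensor product.

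Write $\gamma=\kernel_{\bx}$ and $\gamma^\pi=\kernel_{\bx}^\pi$, both $\Hil$-valued paths on $[0,T]$, and set $a_i\coloneqq\|\kernel_{\bx}|_{[t_{i-1},t_i]}\|_1$. The first key step is Chen's identity (multiplicativity of the signature under concatenation): writing $S_i$ and $\tilde S_i$ for the signatures of $\gamma$ and $\gamma^\pi$ restricted to $[t_{i-1},t_i]$, one has $\signature(\gamma)=S_1\otimes\cdots\otimes S_L$ and $\signature(\gamma^\pi)=\tilde S_1\otimes\cdots\otimes\tilde S_L$. The second key step is the observation that $\gamma$ and $\gamma^\pi$ share endpoints on each subinterval, so $S_i$ and $\tilde S_i$ agree at levels $0$ and $1$ (both equal $1$ and the common increment $\gamma(t_i)-\gamma(t_{i-1})$); hence $S_i-\tilde S_i$ is supported on levels $m\ge 2$. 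Combining the factorial decay $\|\signatureLevel{m}(\cdot)\|\le\|\cdot\|_1^m/m!$ with $\|\gamma^\pi|_{[t_{i-1},t_i]}\|_1\le a_i$ (a straight segment minimizes length) gives $\|(S_i-\tilde S_i)_m\|\le 2a_i^m/m!$, and summing over $m\ge 2$ with the elementary inequality $e^x-1-x\le\tfrac{x^2}{2}e^x$ yields $\|S_i-\tilde S_i\|_{\oplus}\le a_i^2 e^{a_i}$; likewise $\|S_i\|_{\oplus},\|\tilde S_i\|_{\oplus}\le e^{a_i}$.

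The final step is a telescoping over the $L$ factors,
\[
\signature(\gamma)-\signature(\gamma^\pi)=\sum_{i=1}^{L}\tilde S_1\otimes\cdots\otimes\tilde S_{i-1}\otimes(S_i-\tilde S_i)\otimes S_{i+1}\otimes\cdots\otimes S_L,
\]
to which submultiplicativity and the per-factor bounds apply. Using additivity of the $1$-variation along the partition, $\sum_i a_i=\|\gamma\|_1$, the flanking factors of the $i$-th summand contribute $e^{\sum_{j\ne i}a_j}$, so each summand is at most $e^{\sum_{j\ne i}a_j}\,a_i^2 e^{a_i}=e^{\|\gamma\|_1}a_i^2$, and hence the whole sum is at most $e^{\|\gamma\|_1}\sum_i a_i^2$. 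Finally $a_i^2\le a_i\max_j a_j$ together with $\sum_i a_i=\|\gamma\|_1$ gives $\sum_i a_i^2\le\|\gamma\|_1\max_i a_i$, producing the claimed $e^{\|\kernel_{\bx}\|_1}\|\kernel_{\bx}\|_1\max_i\|\kernel_{\bx}|_{[t_{i-1},t_i]}\|_1$.

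The main obstacle I anticipate is bookkeeping rather than conceptual: the Hilbert-space norm appearing in the statement is \emph{not} submultiplicative under $\otimes$, so all product and telescoping estimates must be done in $\|\cdot\|_{\oplus}$ after bounding the statement's norm above by it. The one place requiring genuine care is verifying that this relaxation lands exactly on the stated constant -- in particular that the factor of $2$ in $\|(S_i-\tilde S_i)_m\|\le 2a_i^m/m!$ is precisely absorbed by $e^x-1-x\le\tfrac{x^2}{2}e^x$ (whose level-$m=2$ terms match), leaving no spurious numerical factor.
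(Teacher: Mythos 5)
Your core discretization estimate is correct, and it is essentially the standard argument (note the survey itself gives no proof of this proposition, deferring entirely to \cite{kiraly_kernels_2019}): Chen's identity to factor both signatures over the partition, the observation that $S_i$ and $\tilde S_i$ agree at levels $0$ and $1$ because $\kernel_\bx$ and $\kernel_\bx^\pi$ share endpoints on each subinterval, factorial decay of each factor, and a telescoping sum estimated in the graded $\ell^1$-norm, which dominates the Hilbert norm and is submultiplicative under $\otimes$. The bookkeeping also checks out: with $a_i\coloneqq\|\kernel_{\bx}|_{[t_{i-1},t_i]}\|_1$ you correctly get $2(e^{a_i}-1-a_i)\le a_i^2e^{a_i}$, additivity of $1$-variation gives $\sum_i a_i=\|\kernel_\bx\|_1$, and $\sum_i a_i^2\le\|\kernel_\bx\|_1\max_i a_i$, landing exactly on the stated constant. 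In other words, you have a complete and correct proof of
\[
\|\signatureTrunc{\infty}(\kernel_{\bx}) - \signatureTrunc{\infty}(\kernel_{\bx}^\pi)\| \leq e^{\|\kernel_{\bx}\|_1}\|\kernel_{\bx}\|_1 \cdot \max_{i=1,\ldots,L} \|\kernel_{\bx}|_{[t_{i-1},t_i]}\|_1 .
\]

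The gap is in how you dispose of the truncation term. The proposition is asserted for a fixed finite truncation level $M$, yet its right-hand side is independent of $M$, and your triangle-inequality decomposition leaves the remainder $\|\signatureTrunc{\infty}(\kernel_{\bx}^\pi)-\signatureTrunc{M}(\kernel_{\bx}^\pi)\|\le e^{\|\kernel_\bx\|_1}\|\kernel_\bx\|_1^{M+1}/(M+1)!$, which does not ``drop out'': the statement quantifies over a fixed $M$, not a limit in $M$. In fact no argument can close this gap, because the inequality as printed is false for every finite $M$. Take the static kernel to be the Euclidean inner product and $\bx(t)=tv$, $t\in[0,1]$, $\|v\|=1$, so that $\kernel_\bx=\bx$; then the left-hand side is at least $\|\signatureLevel{M+1}(\bx)\|=1/(M+1)!>0$, while the right-hand side equals $e\cdot\max_i|t_i-t_{i-1}|$, which tends to $0$ as the mesh is refined. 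So what you have actually proved is the corrected two-term bound (equivalently, the $M=\infty$ case, which is presumably what the survey intends by this proposition); the right move is to say this explicitly — that the printed statement needs either $M=\infty$ or an additional $e^{\|\kernel_\bx\|_1}\|\kernel_\bx\|_1^{M+1}/(M+1)!$ on the right-hand side — rather than absorbing the truncation error into a limit the proposition does not take.
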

\subsection{Algorithms}\label{sec:algo}
A direct evaluation of the signature kernel $\sigkernel:\seq \times \seq \to \R$ as an inner product is only feasible if the underlying static kernel $\kernel: \cX \times \cX \to \R$ has a low-dimensional RKHS $\Hil$. If $\cX=\R^d$ and $\kernel$ is the Euclidan inner product, then $\sigkernel$ is just the inner product of plain-vanilla signatures and even this becomes quickly costly.
However, the intuition is that by first lifting the path into a path that evolves in a high- or infinite-dimensional RKHS $\Hil$, we gain much more expressive power. 
Indeed, this is also what the empirical data shows, see e.g.~\cite{toth_bayesian_2020}, namely that using popular choices such as RBF, Matern, or Laplace for the static kernel results in a signature kernel $\sigkernel$ that outperforms the ``trivial'' signature kernel induced by the inner product kernel on $\R^d$. 
The computation for nontrivial kernels cannot be done directly since the RKHS $\Hil$ of a generic kernel $\kernel$ is infinite-dimensional and we can't compute signature of paths evolving in general RKHS $\Hil$. 
But there are at least two different algorithms

\begin{description}
    \item[Dynamic Programming.]
    The idea is to mimic the argument for the monomial kernel $\kernel_\Mon$ in $\R^d$, given in Toy Example 1 and Toy Example 2 in Section \ref{sec:kernel learning}.
    Concretely, given $\bx=(\bx_0,\ldots,\bx_L) \in \seq$, we consider the signature of $\kernel_\bx$, which is the sequence $\bx$ lifted to a sequence in $\Hil$ defined in Equation~\ref{eq:pwl_kernel_approximation}.
    First, the signature of one linear segment (see Equation~\ref{eq:sig_linear_path}) is given as
    \[
    \signature(\kernel_\bx|_{[i-1, i]}) = \Phi_\Mon(\delta_i\kernel_{\bx}) = \left(1, \delta_i\kernel_{\bx}, \frac{(\delta_i\kernel_{\bx})^{\otimes 2}}{2!}, \ldots\right),
\]
where $\delta_i \kernel_\bx \coloneqq \kernel_{\bx_{i+1}}-\kernel_{\bx_i}=\kernel(\bx_{i+1},\cdot)-\kernel(\bx_i,\cdot) \in \Hil$. 
Second, using the so-called Chen identity that expresses sequence concatenation as tensor multiplication allows us to express the signature of the entire piecewise linear path as
\[
    \signature(\kernel_{\bx}) = \Phi_\Mon(\delta_1\kernel_{\bx}) \otimes \ldots \otimes \Phi_\Mon(\delta_L\kernel_{\bx}).
\]
The degree $m$ component can be explicitly expressed as
\[
    \signatureLevel{m}(\kernel_{\bx}) = \sum_{\bi: 1 \leq i_1 \leq \ldots \leq i_{m} \leq L} \frac{1}{\bi!} \delta_{i_1}\kernel_{\bx} \otimes \ldots \otimes \delta_{i_m} \kernel_{\bx}\in \Hil^{\otimes m},
\]
where for $\bi = (i_1, \ldots, i_m)$, we define $\bi! \coloneqq n_1! \cdots n_k!$ if $\bi$ contains $k = |\{i_1, \ldots, i_m\}|$ distinct elements, and $n_1, \ldots, n_k$ denotes the number of times they occur in $\bi$. Therefore, the truncated signature kernel can be written as
\begin{align*}
    \kernel_{\Sig, :M}(\bx, \by)     & = \sum_{m=0}^M \,\sum_{\substack{\bi: 1 \leq i_1 \leq \ldots \leq i_{m} \leq L\\\bj: 1 \leq j_1 \leq \ldots \leq j_{m} \leq L}} \frac{1}{\bi!\cdot \bj!} \prod_{r=1}^m \nabla_{i_r, j_r}^{\kernel}(\bx, \by),
\end{align*}
where
\[
    \nabla_{i, j}^{\kernel}(\bx, \by) \coloneqq \langle \delta_{i} \kernel_{\bx}, \delta_{j} \kernel_{\by}\rangle_H = \kernel(\bx_{i-1}, \by_{j-1}) - \kernel(\bx_i, \by_{i-1}) - \kernel(\bx_{i-1}, \by_j) + \kernel(\bx_i, \by_j).
\]
Finally, we note that we can we can apply Horner's scheme to obtain
\begin{align}\label{eq:sigkernel formula}
    \kernel_{\Sig, :M}& (\bx, \by) = \nonumber\\
    &1 + \sum_{\substack{i_1 \ge 1\\ j_1 \ge 1}} \nabla^{\kernel}_{i_1, j_1}(\bx, \by) \cdot \left( 1 + \sum_{\substack{i_2 \ge i_1\\ j_2 \ge j_1}} \frac{\nabla^{\kernel}_{i_2, j_2}(\bx, \by)}{C_2(\bi_2, \bj_2)} \cdot \left( 1+ \ldots \sum_{\substack{i_{M} \ge i_{M-1}\\ j_{M} \ge j_{M-1}}} \frac{\nabla^{\kernel}_{i_{M}, j_{M}}(\bx, \by)}{C_{M}(\bi_{M}, \bj_{M})}\right)\right),
\end{align}
where $\bi_m = (i_1, \ldots, i_m)$, $\bj_m = (j_1, \ldots, j_m)$, and $C_m(\bi_m, \bj_m) = n_{\bi_m} \cdot n_{\bj_m}$, where $n_{\bi_m} \ge 1$ is the largest number such that $i_k = i_m$ for all $k = m-n_{\bi_m}, \ldots, m$. The constants $C_m(\bi_m, \bj_m)$ are used to take into account the counting of repetitions from the $\frac{1}{\bi!\cdot \bj!}$ term.
It is straightforward to implement \eqref{eq:sigkernel formula} by a recursive algorithm which inherits the advantages of the classic Horner scheme. 
    \item[PDE Solver.]
    Another approach was given in \cite{salvi_signature_2021-1} which focuses on the case of $M=\infty$.
    The idea is to define the function $K: [0,L]^2 \rightarrow \R^d$ by the signature kernel
\[
    K(s,t) = \sigkernel(\bx|_{[0,s]}, \by|_{[0,t]})
\]
evaluated on $\bx$ restricted to $[0,s]$ and $\by$ restricted to $[0,t]$. Then, this function satisfies the following PDE
\[
    \frac{\partial^2 K}{\partial s \partial t} (s,t) = \nabla_{\lceil s \rceil, \lceil t \rceil}^{\kernel}(\bx, \by) \cdot K(s,t), \quad K(s,0) = K(0, t) = 1.
\]
Such PDEs are known as a so-called Goursat PDEs.
Hence, the computation boils down to applying a PDE solver and \cite{salvi_signature_2021-1} provide such a scheme and analyzes the error rate.
A priori, the PDE approach is focused on $M=\infty$ but cross-validating $M$ often improves performance and \cite{cass2021general} provides such a generalization of the PDE approach to approximates $\sigkernel$ for any $M \ge 1$.
\end{description}
Both algorithms work well and packages are available\footnote{\url{https://github.com/tgcsaba/KSig} and \url{https://github.com/crispitagorico/sigkernel}.} that adapt the underlying algorithms to work on GPUs.
These two algorithms are very different, even for a single evaluation of $\kernel_{\Sig,:M}(\bx,\by)$. The dynamic programming approach is exact for finite $M$ and for $M=\infty$ the approximation error stems from the factorial decay; in contrast, the PDE approach provides an approximation that is determined in by the properties of the scheme the PDE solver uses such as the step size. 
Similarly, these two algorithms are also amenable to different computational strategies when the whole Gram matrix needs to be evaluated. 
The trade-offs between these choices can vary between datasets and we simply recommend to try both algorithms.

\subsection{Hyperparameter Tuning}\label{sec:hyperparameter}
Given a fast algorithm to compute the Gram for a given parameter choice, the vast majority of the computational budget should be spent on hyperparameter tuning. 
This is true in general true for any kernel but in particular for the signature kernel where the parameter choice can drastically influcence performance.
The signature kernel has the following hyperparameters.
\begin{enumerate}
    \item \textbf{Static Kernel Parameters} $\Theta_{\kernel}$. The signature kernel takes as input a kernel $\kernel$ on $\cX$ and turns it into a kernel $\sigkernel$ on $\seq$.
Typically, the static kernel $\kernel$ is parametrized; for example, the classic RBF kernel $\kernel(\bx,\by) = \exp(\theta \|\bx-\by\|^2)$ has a hyperparamter $\theta$.
In general, hyperparameters are not limited to scalars and we denote with $\Theta_{\kernel}$ the set of possible hyperparameters.
Consequently, the signature kernel $\sigkernel$ then inherits all these hyperparameters of the static kernel $\kernel$.
\item \textbf{Truncation Parameters} $\mathbb{N}_\infty$. 
In practice, the optimal choice of the truncation level $M \in \N_\infty$ varies widely between datasets and can be as low as $M=2$. 
Informally, one expects a trade-off between the non-linearity of the static kernel $\kernel$ and the optimal truncation level $M$, but even  without kernelization the choice of optimal $M$ is subtle, \cite{Fermanian2020FunctionalLR}.
Indeed, an extreme case is to take the trivial inner product kernel and experiments show that in this case the optimal $M$ is large; in contrast for RBF and other kernels, lower choices of $M$ are typically optimal.
However, these are stylized facts and vary between datasets.
\item \textbf{Preprocessing Parameters} $\Theta_{preprocess}$. It is common practice in time-series modelling to preprocess the original time-series. 
For example, instead of the original sequence in $\R^d$ one considers a sequence in $\R^{2d}$ where the additional coordinates are given by lagging the original time series (a sliding window).
Similarly, for multi-variate time-series the different coordinates might evolve on different scales and it makes sense to normalize them; see for example \cite{toth_bayesian_2020} for some standard time-series preprocessing for the signature kernel.
In general, there is a large variety of preprocessing steps and we denote these with $\Theta_{preprocess}$.

\item \textbf{Algebraic Parameters} $\Theta_{algebra}$. Signatures can be generalized in several ways and one can replace iterated integrals by other objects. How exactly these objects are constructed goes beyond the scope of this article, but this often allows us to increase the empirical performance. 
We do not further discuss this here but refer to the ``non-geometric'' signatures in \cite{kiraly_kernels_2019,toth_seq2tens_2021} which require us to go beyond the usual (Hopf-)algebraic treatment of signatures, see \cite{diehl2020generalized}.
This induces another parameter-set $\Theta_{algebra}$.

\item \textbf{Normalization Parameters} $\Theta_{normalization}$. As we discuss in the next section, a normalization makes the signature kernel a more robust statistics; in particular, this resolves the aforementioned issues about non-compactness. 
There are different ways to normalize, and this is further discussed in~\cite{chevyrev_signature_2022}.
Furthermore, we can independently rescale the inner products for tensors of different degrees to obtain different signature kernels~\cite{cass2021general}.
We denote these normalization and rescaling parameters $\Theta_{normalization}$. 
\end{enumerate}
Therefore the hyperparameter set of the signature kernel $\sigkernel$ build from a static kernel $\kernel$ with parameter set $\Theta_{\kernel}$ is
\[
\Theta_{\sigkernel} \coloneqq  \Theta_{\kernel} \times \mathbb{N}_\infty \times \Theta_{preprocess} \times \Theta_{algebra} \times \Theta_{normalization}.
\]
We emphasize that (as for most kernels) the hyperparameter selection is essential to get strong performance. 
Depending on the problem this requires different approaches; a basic distinction is between supervised and unsupervised problems.

\begin{description}
    \item{\textbf{Supervised Learning}.}
    An objective function is given and in principle straightforward cross-validation allows us to optimize $\theta\in \Theta_{\sigkernel}$ by grid-search.
    However, since $\Theta_{\sigkernel}$ is usually high-dimensional, more sophisticated hyperparameter tuning algorithms, such as Bayesian optimization, can be very useful and are implemented in most ML toolboxes.
    Either way, standard hyperparameter tuning can be used and simple grid search typically works well. 
    \item{\textbf{Unsupervised Learning}.}
    In contrast to supervised problems the situation is less straightforward since the choice of optimization criteria is often not immediate.
    Even worse, standard heuristics such as the so-called median heuristic \cite{garreau2018large} are not justified for the signature kernel and indeed typically lead to poor performance.
    For example, we discuss this in the context of hypothesis testing in Section \ref{sec:applications}. 
    There, the median heuristic is used for some kernels, but there is no reason why this heuristic should apply to the signature kernel and indeed this would result in tests with poor power. 
    In general, for unsupervised problems the specific application must be taken into account.
\end{description}

\section{Robust Moments, Signatures, and Kernels}\label{sec:robust}
In Section~\ref{sec:good_bad_ugly}, we saw that monomial features $\Phi_\Mon$ loose the ability to approximate functions and characterize laws of random variables when we consider unbounded domains. These issues extend generically to the signature features $\signature$. While compactness can be a reasonable assumption for monomials in the case of $\cK \subset \R^d$, this is a much stronger assumption\footnote{Recall that a topological vector space is locally compact if and only if it is finite-dimensional. See also Remark \ref{rem:compact support}, that even given a  compact set that carries nearly all the support of the measure, this is not enough for characteristicness.} to make for signatures in the case of $\cK \subset \pathsOneVar$.

\begin{figure}[!htbp]
    \centering
    \includegraphics[width=0.7\textwidth]{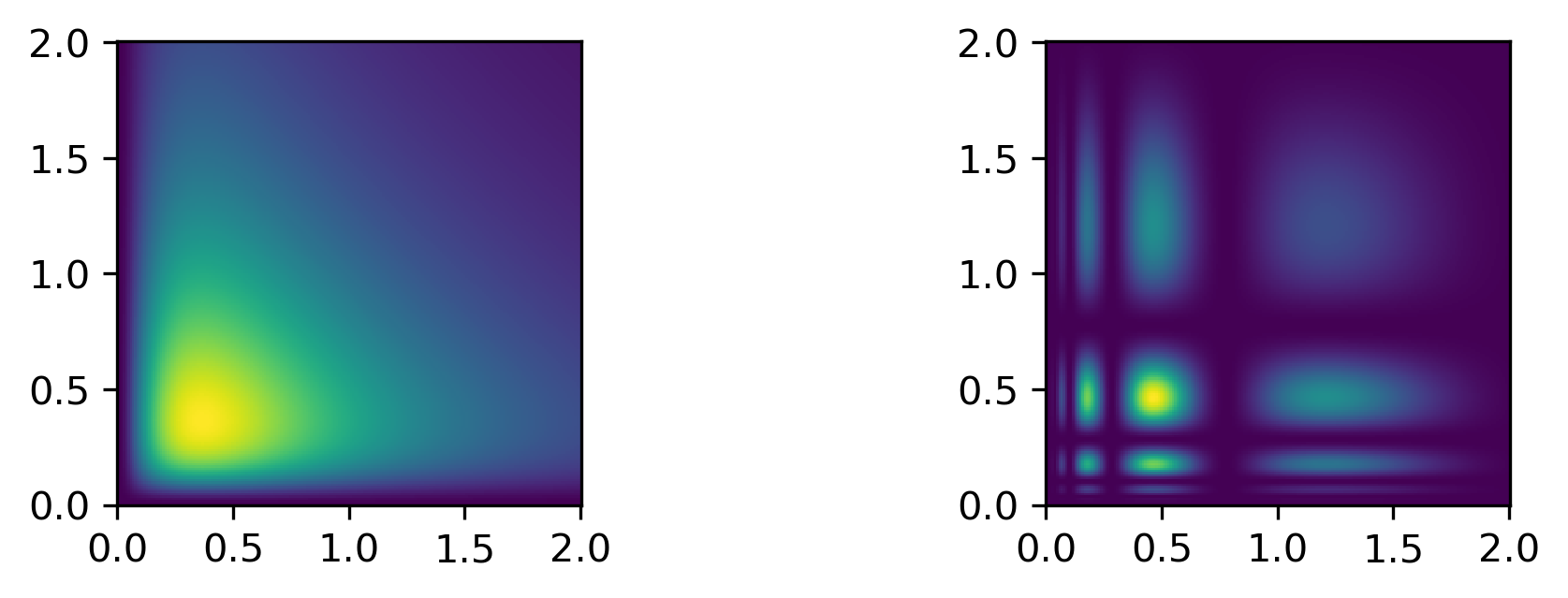}
    \caption{Probability density functions of $\bV$ (left) and $\bW$ (right).}
\end{figure}

\begin{example}[No Characteristicness] There exist two stochastic processes $\bX,\bY$ with sample paths that are straight lines, such that
\[
\E[\signature(\bX)]=\E[\signature(\bY)].
\]
For example, one can take 
\[\bX(t)=t \cdot \bV \text{ and } \bY(t) = t\cdot \bW\]
where $\bV$ is a random variable in $\R^2$ drawn from the density
\[
\bV \sim   \rho_1(\bv)=\frac{1}{2 \pi \cdot \bv_1 \bv_2} \exp\left(\frac{-(\log^2(\bv_1) + \log^2(\bv_2))}{2}\right)
\]
and $\bW$ drawn from 
\[
\rho_2(\bw_1, \bw_2) = \rho_1(\bw_1,\bw_2) \prod_{i=1}^2 \left(1 + \sin(2\pi \log(\bw_i))\right).
\]
\end{example}
One option is to make stronger model assumptions on the decay rate of moments which would allow characterizing the law; in $\R^d$ this is classic and for paths, we refer to~\cite{chevyrev2016characteristic}.
However, in an inference context we usually do not want to make strong assumptions and the above example is a warning that things can go genuinely wrong, even for trivial stochastic processes.
A different, but related, issue is the notion of robustness in statistics.
\begin{example}[No Robustness]\label{ex:non robust}
    Suppose $\bX$ is a real-valued random variable with law $\mu$.
    Fix $\epsilon \in (0,1)$ and $\bx \in \R$ and let $\bX_{\bx,\epsilon}$ be the perturbed random variable with law $\mu_{\bx,\epsilon} = (1-\epsilon) \mu + \epsilon\delta_\bx$. 
    When $\bx \gg \E[\bX]$, it is easy to see that the outlier Dirac measure $\delta_\bx$ has outsized influence on the mean $\E[\bX_{\bx,\epsilon}]$, and this influence is magnified for higher moments $\E[\bX_{\bx,\epsilon}^{\otimes m}]$. 
\end{example}
 The reason Example \ref{ex:non robust} is worrisome is that in practice we are only given a finite number of samples to estimate our statistics, such as $\E[\bX^{\otimes m}]$ or $\E[\int d\bX^{\otimes m}]$.
 In practice, this finite sample is taken from real-world observations and it is common that this data is not clean; that is, outliers such as a $\bx \gg \E[\bX]$ in the above example are not uncommon due to sensor errors, data recording, etc.\footnote{Formally, we are given samples from $\bX_{\bx,\epsilon}$ and not from $\bX$}.
 However, some statistics can still be well-estimated, and the design of such robust features has been a central topic of modern statistics \cite{hampel_robust_1986,huber_robust_2011}.
 Ironically, the standard ML benchmarks for time-series are fairly clean in terms of outliers but such outliers appear often in applications in the ``real-world''.
 \subsection{Robust Statistics}
The so-called \emph{influence function} quantifies the robustness of a statistic.
\begin{definition}[\cite{hampel_robust_1986}]
    Let $\cX$ be a topological space and $\cP(\cX)$ be the space of Borel probability measures on $\cX$. For a map $T: \cP(\cX) \rightarrow \R$, define the \emph{influence function of $T$ at $\mu \in \cP(\cX)$} by
    \[
        \IF(\bx; T, \mu) \coloneqq \lim_{\epsilon \searrow 0} \frac{T(\epsilon \delta_\bx + (1-\epsilon)\mu) - T(\mu)}{\epsilon}
    \]
    for each $x \in \cX$ where this limit exists. We say that $T$ is \emph{B-robust at $\mu$} if $\sup_{\bx \in \cX} |\IF(\bx; T, \mu)| < \infty$. 
\end{definition}
Example \ref{ex:non robust} shows that the statistic ``the $m$-th moment'', \[T(\mu)= \int \bx^{\otimes m}\, \mu(d\bx)\] is not B-robust and by the same argument we immediately see that expected signatures are not robust statistics to describe laws of stochastic processes. 
What goes wrong in both cases is that monomials, $\bx^{\otimes m}$ resp.~$\int d\bx^{\otimes m}$, explode when the argument gets big, so if we observe among our $N$ samples an extreme value $\bx$, this results in an $\epsilon=\frac{1}{N}$-weighted Dirac at $\bx$ in the resulting empirical measure and the contribution of $\bx^{\otimes m}$ weighted with mass $\epsilon\delta_\bx$, can dominate the estimator.

\subsection{Robust Signatures}
One way to address this issue is to compensate for the exponential growth as $\bx$ becomes large. 
In the case of the monomial map for $\bx \in \R^d$, this can be achieved by rescaling $\bx^{\otimes m}$ to $\lambda_m(\bx) \bx^{\otimes m}$, where $\lambda_m: \R^d \rightarrow \R$ is a function which decays quickly as $\|\bx\| \to \infty$. 
What makes the precise choice of the penalty $\lambda$ challenging is that it needs to simultaneously decay sufficiently quickly to make the statistics robust but it cannot decay too quickly in order to preserve the universal and characteristic properties. In particular, we wish to have robust analogues of Proposition \ref{prop:universal phi} and Proposition \ref{prop:universal sig}. As a technical note, in order to deal with non-locally compact spaces, we equip the space of continuous bounded functions in the next two theorems with the \emph{strict topology}. This was introduced in~\cite{giles_generalization_1971}, and further discussion on its application to robustness can be found in~\cite{chevyrev_signature_2022}. In the following, we reformulate the scaling factors $\lambda_m$ into a single normalization map $\lambda: \prod (\R^d)^{\otimes m} \to \prod (\R^d)^{\otimes m}$.

\begin{theorem}\label{thm:robust moments}
    There exists a map $\lambda: \prod_{m \ge 0}(\R^d)^{\otimes m} \to \prod_{m \ge 0} (\R^d)^{\otimes m}$ such that
    \[
    \lambda \circ\Phi_\Mon: \R^d \to \prod_{m \ge 0}(\R^d)^{\otimes m}, \quad \bx \mapsto \lambda (\Phi_\Mon(\bx))
    \]
    \begin{enumerate}
    \item is universal to $C_b(\R^d,\R)$ equipped with the strict topology, and
    \item is characteristic to the space of signed measures on $\R^d$.
    \end{enumerate}
    Moreover, $\mu\mapsto \E_{\bX \sim \mu}[\lambda\circ \Phi_\Mon(\bX)]$ is B-robust.  
\end{theorem}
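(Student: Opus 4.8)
The plan is to construct $\lambda$ as a grading-preserving \emph{dilation normalization}, and then to read off the three assertions from (i) boundedness of the normalized feature, (ii) its injectivity, and (iii) the general equivalence between universality in the strict topology and characteristicness to signed measures. Writing $s = (s_0, s_1, s_2, \ldots) \in \prod_{m \ge 0}(\R^d)^{\otimes m}$ and $\delta_c(s) \coloneqq (c^m s_m)_{m \ge 0}$ for the dilation by $c > 0$, I would set $\lambda(s) \coloneqq \delta_{c(s)}(s)$, where $c(s) \in (0,\infty)$ is chosen so that $\|\delta_{c(s)}(s)\|$ equals a fixed target value. Since $c \mapsto \|\delta_c(s)\|^2 = \sum_m c^{2m}\|s_m\|^2$ is continuous and strictly increasing from $\|s_0\|^2$ to $+\infty$, and $\Phi_\Mon(\bx)$ has degree-zero component equal to $1$, the scalar $c(\Phi_\Mon(\bx))$ is uniquely determined. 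Here I must check that $c(\cdot)$ is continuous in $\bx$ and that $\lambda \circ \Phi_\Mon$ is injective; injectivity follows because each $\delta_c$ is injective and the normalizing equation lets one recover $c$, hence $s$, from $\lambda(s)$. The decisive structural gain is that $\|\lambda(\Phi_\Mon(\bx))\|$ is then bounded uniformly in $\bx$.

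B-robustness follows almost immediately. For fixed $\bw$, consider the scalar statistic $T_\bw(\mu) \coloneqq \langle \bw, \E_{\bX \sim \mu}[\lambda \circ \Phi_\Mon(\bX)]\rangle$. Its influence function computes to $\IF(\bx; T_\bw, \mu) = \langle \bw, \lambda(\Phi_\Mon(\bx)) - \E_\mu[\lambda \circ \Phi_\Mon]\rangle$, which is bounded in absolute value by $2\|\bw\|$ times the uniform norm bound above, uniformly in $\bx$. Hence each $T_\bw$ is B-robust, in sharp contrast to the unnormalized moments, where $\Phi_\Mon(\bx)$ explodes as $\|\bx\| \to \infty$ and the argument of Example~\ref{ex:non robust} applies.

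The analytic heart is universality and characteristicness, and here I would exploit the equivalence recorded in the Remark after \Cref{prop:universal phi}: for a bounded continuous map into a Hilbert space, universality with respect to $C_b(\R^d,\R)$ under the strict topology of \cite{giles_generalization_1971} is equivalent to characteristicness with respect to finite signed measures, since the dual of $C_b$ in the strict topology is precisely that space of measures. It therefore suffices to prove characteristicness, i.e.\ injectivity of $\mu \mapsto \E_\mu[\lambda \circ \Phi_\Mon]$ on signed measures, and then transfer to universality by duality as in \cite{chevyrev_signature_2022,simon2020metrizing}. For characteristicness I would show that the coordinate functionals $\bx \mapsto \langle \bw, \lambda(\Phi_\Mon(\bx))\rangle$ separate points (from injectivity of $\lambda \circ \Phi_\Mon$) and, together with boundedness and strict continuity, generate a family rich enough to separate signed measures in the strict topology.

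The main obstacle is that normalization destroys the shuffle/algebra identity \eqref{eq:product}: the scalar $c(\Phi_\Mon(\bx))$ depends nonlinearly on $\bx$, so products of normalized linear functionals are no longer normalized linear functionals, and one cannot directly invoke Stone--Weierstrass on the normalized functionals. The way around this, which I would follow, is not to demand that the normalized features form an algebra, but to verify that $\lambda$ is an \emph{admissible} normalization in the sense of \cite{chevyrev_signature_2022}: it must decay slowly enough that the characteristicness on bounded sets from \Cref{prop:universal phi} is not lost, yet fast enough to render the image bounded and the embedding strictly continuous. Checking that the dilation normalization lands in this admissible window --- simultaneously preserving injectivity on signed measures and yielding a bounded, strictly-continuous feature --- is the step I expect to require the most care, and is where I would lean most heavily on the machinery of \cite{chevyrev_signature_2022}.
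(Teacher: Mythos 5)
There is a genuine gap, and it sits exactly where you place the weight of your construction: the normalizing equation. You define $\lambda(s)=\delta_{c(s)}(s)$ with $c(s)$ chosen so that $\|\delta_{c(s)}(s)\|$ equals a \emph{fixed} target value $K$. But dilation commutes with the monomial map, $\delta_c\bigl(\Phi_{\Mon}(\bx)\bigr)=\Phi_{\Mon}(c\bx)$, and $\|\Phi_{\Mon}(\bv)\|^2=\sum_{m\ge 0}\|\bv\|^{2m}/(m!)^2$ depends only on $\|\bv\|$ and is strictly increasing in it. Hence the constraint $\|\Phi_{\Mon}(c(\bx)\bx)\|=K$ forces $\|c(\bx)\bx\|=r_K$ for a fixed radius $r_K$, i.e.
\[
\lambda\circ\Phi_{\Mon}(\bx)=\Phi_{\Mon}\left(r_K\,\frac{\bx}{\|\bx\|}\right),
\]
which depends only on the direction of $\bx$. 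So $\lambda\circ\Phi_{\Mon}(\bx)=\lambda\circ\Phi_{\Mon}(2\bx)$: the normalized feature map is not injective, the Dirac measures at $\bv$ and at $2\bv$ have identical normalized moments (characteristicness fails), and every functional $\langle\bw,\lambda\circ\Phi_{\Mon}(\cdot)\rangle$ is constant along rays (universality fails). Your injectivity argument --- ``the normalizing equation lets one recover $c$, hence $s$, from $\lambda(s)$'' --- is exactly backwards: because the target is a constant, $\|\lambda(s)\|$ carries no information about $\|s\|$, so $c$ cannot be recovered; indeed $\lambda(\delta_a s)=\lambda(s)$ for every $a>0$ whenever the equation is solvable. (There is also a degenerate issue at $\bx=0$, where no unique $c$ exists, but that is minor by comparison.)

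The fix --- and this is what the paper actually does, deferring to \cite[Proposition 14]{chevyrev_signature_2022} in lieu of a proof --- is to normalize to a \emph{non-constant} target: choose $\psi:[1,\infty)\to[1,\infty)$ bounded, continuous, and strictly increasing, and let $c(s)$ solve $\|\delta_{c(s)}(s)\|^2=\psi(\|s\|^2)$. Boundedness of $\psi$ still gives the uniform bound you need, so your B-robustness paragraph survives unchanged; strict monotonicity of $\psi$ is what rescues injectivity, since from $\lambda(s)$ one reads off $\psi(\|s\|^2)$, hence $\|s\|^2$, hence $c(s)$, hence $s$. Concretely, $\lambda\circ\Phi_{\Mon}(\bx)=\Phi_{\Mon}\bigl(g(\|\bx\|)\,\bx/\|\bx\|\bigr)$ for a bounded, strictly increasing radial reparametrization $g$, an injective map into a bounded set. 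A secondary correction: your worry that normalization ``destroys the shuffle/algebra identity'' is misplaced. Precisely because $\lambda$ acts by dilation, $\lambda\circ\Phi_{\Mon}(\bx)=\Phi_{\Mon}(\bx')$ for a renormalized point $\bx'$, and the monomial analogue of \eqref{eq:product} holds pointwise at $\bx'$; thus products of normalized linear functionals are again normalized linear functionals, these form a point-separating algebra once $\bx\mapsto\bx'$ is injective, and the strict-topology Stone--Weierstrass theorem of \cite{giles_generalization_1971} plus the duality you invoke then yield universality and characteristicness. The ``admissible window'' is therefore entirely a condition on the target function $\psi$ --- bounded (for robustness) and strictly increasing (for injectivity) --- and your constant target lies outside it.
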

In particular, the set $\{\bx \mapsto \langle \bw, \lambda \circ \Phi_\Mon(\bx) \rangle\, : \,  \bw \in \bigoplus (\R^d)^{\otimes m}\}$ is dense in $C_b(\R^d,\R)$ in the strict topology and the normalized moments $\E[\lambda \circ \Phi_\Mon(\bX)]$ always characterizes the law of $\bX$.
The existence of the function $\lambda$ is not trivial and while $\lambda$ is not given in an explicit form but it can be efficiently computed, see \cite[Proposition 14]{chevyrev_signature_2022}. We find that the same argument applies to signatures.
\begin{theorem}\label{thm: robust signatures}
    There exists a map $\lambda: \prod_{m \ge 0} \bar\Hil^{\otimes m} \to \prod_{m \ge 0} \bar\Hil^{\otimes m}$ such that
    \[
    \lambda \circ\signature: \Hil_{1-var} \to \prod_{m \ge 0} \bar\Hil^{\otimes m}, \quad \bx \mapsto \lambda (\signature(\bar \bx))
    \]
    \begin{enumerate}
    \item is universal to\footnote{Analagous to Proposition \ref{prop:universal sig}, $\bar \Hil_{1-var}$ denotes the paths of $\Hil_{1-var}$ that are augmented with a time coordinate. Explicitly, paths in $\bar \Hil_{1-var}$ are paths evolving in $\Hil \oplus \R$ of bounded 1-variation.}  $C_b(\bar \Hil_{1-var},\R)$ equipped with the strict topology, and
    \item is characteristic to the space of signed measure on $H_{1-var}$.
    \end{enumerate}
    Moreover, $\mu\mapsto \E_{\bX \sim \mu}[\lambda\circ \signature(\bar \bX)]$ is B-robust.  
\end{theorem}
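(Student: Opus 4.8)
The plan is to lift the proof of \Cref{thm:robust moments} from the monomial map $\Phi_\Mon$ to the signature map $\signature$, since all the structural ingredients that drive the monomial argument have signature analogues. The monomial proof rests on three facts: (i) $\Phi_\Mon$ separates points of $\R^d$; (ii) its linear functionals form a point-separating subalgebra (monomials multiply to monomials); and (iii) the image decays factorially, so that a degree-wise rescaling can force the feature map into a bounded set. For the signature the corresponding facts are: (i$'$) the time-augmented signature $\bx \mapsto \signature(\bar\bx)$ is injective on $\Hil_{1-var}$, which is exactly the point-separation established in \Cref{prop:universal sig} and is the reason we augment with a time coordinate; (ii$'$) the shuffle identity \eqref{eq:product} shows that products of linear functionals of $\signature$ are again linear functionals of $\signature$, so these functionals form a point-separating subalgebra of $C(\bar\Hil_{1-var},\R)$; and (iii$'$) the factorial bound $\|\signatureLevel{m}(\kernel_\bx)\| \le \|\kernel_\bx\|_1^m/m!$ (see Table~\ref{table:monomials}) holds verbatim in the infinite-dimensional state space $\Hil$. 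First I would record these three facts, taking care that all estimates are stated for tensors over the Hilbert space $\bar\Hil$ rather than over $\R^{d+1}$, which is harmless since everything is phrased through inner products.

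Next I would construct the normalization $\lambda$ exactly as in the monomial case, i.e.\ as a tensor normalization that acts on each graded piece by a common dilation: writing $\delta_c(s_0,s_1,s_2,\ldots) = (s_0,c\,s_1,c^2 s_2,\ldots)$, set $\lambda(s) = \delta_{\rho(s)}(s)$ where $\rho(s)>0$ is the scalar chosen so that $\|\lambda(s)\|$ equals a fixed constant; the factorial decay (iii$'$) guarantees that $\rho$ is well-defined, continuous, and that $\lambda$ is injective. The existence and computability of such a $\lambda$ is precisely \cite[Proposition 14]{chevyrev_signature_2022}, and I would invoke it rather than reconstruct it. With $\lambda\circ\signature(\bar\cdot)$ now taking values in a bounded subset of $\prod_{m\ge0}\bar\Hil^{\otimes m}$, B-robustness is immediate: a bounded feature map has a uniformly bounded influence function, so $\sup_{\bx}|\IF(\bx;T,\mu)|<\infty$ for every coordinate functional $T$, and hence for $\mu\mapsto \E_{\bX\sim\mu}[\lambda\circ\signature(\bar\bX)]$.

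For universality and characteristicness I would run the strict-topology argument. Because $\bar\Hil_{1-var}$ is not locally compact, ordinary Stone--Weierstrass fails, and one instead works in $C_b(\bar\Hil_{1-var},\R)$ equipped with the strict topology of \cite{giles_generalization_1971}, for which a generalized Stone--Weierstrass theorem holds: a subalgebra that separates points, contains the constants, and vanishes nowhere is $\beta$-dense. Using (i$'$) and (ii$'$) the un-normalized functionals already separate points and form such an algebra; the role of $\lambda$ is to move these functions into $C_b$ (boundedness) while retaining point-separation, since $\lambda$ is injective and the dilation factor $\rho(s)$ can be recovered from $\lambda(s)$. This yields universality with respect to the strict topology, and by the equivalence between universality and characteristicness noted in the remark following \Cref{prop:universal phi} --- applied here to signed measures --- characteristicness to signed measures on $\Hil_{1-var}$ follows at once.

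The hard part will be the density step once the normalization is present: after composing with $\lambda$, the functionals $\langle\bw,\lambda\circ\signature(\bar\cdot)\rangle$ no longer obviously close under multiplication, because the shuffle identity is an identity for the \emph{un}-normalized signature. The clean resolution, following \cite{chevyrev_signature_2022}, is to prove density for the un-normalized algebra on the relevant bounded sets and transport it through the homeomorphism induced by $\lambda$, rather than to seek an algebra structure for the normalized functionals directly; making this transport rigorous in the strict topology, uniformly over the non-locally-compact domain $\bar\Hil_{1-var}$, is the only genuinely delicate point. Everything else is a direct transcription of the monomial proof, which is why one can assert that ``the same argument applies''.
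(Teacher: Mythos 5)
Your high-level architecture --- injectivity of the time-augmented signature, the shuffle-algebra property, factorial decay, Giles' strict-topology Stone--Weierstrass, duality for characteristicness, and boundedness for B-robustness --- is the right one, and it is exactly what the paper gestures at when it says the argument of \Cref{thm:robust moments} ``applies to signatures'' and defers to \cite[Theorem 21]{chevyrev_signature_2022}. However, two of your concrete steps fail. First, your explicit normalization --- pick $\rho(s)$ so that $\|\delta_{\rho(s)}(s)\|$ equals a fixed constant --- is \emph{not} injective: for any $s$ with a nonzero component of positive degree, $c \mapsto \|\delta_c(s)\|$ is strictly increasing, so the entire dilation orbit $\{\delta_c(s) : c>0\}$ is crushed to a single point, and in particular $\rho(s)$ cannot be recovered from $\lambda(s)$ as you claim. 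This matters because dilation orbits of signatures contain signatures of genuinely distinct paths: by the scaling property in Table~\ref{table:monomials}, $\delta_c \signature(\bx) = \signature(c\bx)$, so for instance $\bx$ and $\by(t) = c^{-1}\bx(ct)$ have time-augmented signatures lying in the same orbit (reparametrization invariance), and your $\lambda \circ \signature(\bar\cdot)$ would fail to separate them; universality and characteristicness would then be false. The normalization of \cite[Proposition 14]{chevyrev_signature_2022} avoids this by requiring $\|\lambda(s)\| = \phi(\|s\|)$ for a \emph{strictly increasing} bounded function $\phi$, which is precisely what makes $\lambda$ injective.

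Second, what you flag as the ``genuinely delicate point'' is not delicate at all, and your proposed workaround is the wrong move. The shuffle identity is an identity for group-like elements, and dilations preserve group-likeness: since $\delta_c\signature(\bx) = \signature(c\bx)$, the normalized signature satisfies $\lambda \circ \signature(\bx) = \signature(\bx/n(\bx))$, i.e.\ it is itself a signature --- this is exactly the paper's remark ``Tensor normalization as path normalization''. Consequently products of linear functionals of the \emph{normalized} signature are again linear functionals of the normalized signature, so these functionals form a bona fide point-separating subalgebra of $C_b(\bar\Hil_{1-var},\R)$ (now bounded, because the image of $\lambda$ is bounded), and the strict-topology Stone--Weierstrass theorem of \cite{giles_generalization_1971} applies directly; no transport argument is needed. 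Your alternative --- prove density for the un-normalized algebra on bounded sets and push it through the homeomorphism induced by $\lambda$ --- cannot work as stated: the un-normalized functionals are unbounded on the non-compact domain $\bar\Hil_{1-var}$, hence do not lie in $C_b$ at all, so there is no strict-topology density statement about them to transport, and gluing approximations over an exhaustion by bounded sets is exactly the kind of argument the strict topology is designed to circumvent.
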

We formulated Theorem \ref{thm: robust signatures} for paths evolving in Hilbert spaces in view of kernelization but the result also holds for the ordinary signature with $\kernel=\operatorname{id}$ and $\Hil=\R^d$.
Further, the analogous statements hold without adding a time coordinate by considering functions or measures on tree-like equivalence classes of paths, see \cite[Theorem 21]{chevyrev_signature_2022}.
\begin{remark}[Tensor normalization as path normalization.]
    The normalization $\lambda$ on the level of the tensors is equivalent to a normalization of the underlying path. That is $\lambda \circ \Phi_{\Sig}(\bx)$ can be rewritten as $\Phi_{\Sig}(\frac{\bx}{n(\bx)})$ for a function $n(\bx) \in \R$ of the path $\bx$.
    This way, we can think of $\lambda$ as a very specific data normalization, see \cite{chevyrev_signature_2022}.
\end{remark}
\begin{remark}[Nearly compact support does not help.]\label{rem:compact support}
    We emphasize that it can be easy to find for every $\epsilon>0$ a compact set that carries $1-\epsilon$ the mass of the probability measure. 
    The issue is that the functions we are interested, monomials or iterated integrals, behave badly on the remaining set carrying $\epsilon$ of the probability mass. 
    The normalization $\lambda$ turns this functions into bounded, hence well-behaved functions, while still keeping the necessary algebraic structure.
\end{remark}
\subsection{Robust (Signature) Kernels}
The normalization $\lambda$ allows us to overcome the drawbacks of our feature maps, $\Phi_\Mon$ and $\signature$ on non-compact sets. 
Given a static kernel $\kernel:\cX \times \cX \to \R$ with associated feature map $\varphi: \cX \rightarrow H$, we can define robust versions of the associated kernels, $\kernel_\Mon$ and $\sigkernel$, by
\[
\kernel_\Mon \coloneqq \langle \lambda \circ \Phi_\Mon(\varphi(\bx)), \lambda \circ \Phi_\Mon(\varphi(\by)) \rangle \quad \text{and} \quad \sigkernel(\bx,\by) \coloneqq \langle \lambda \circ \signature(\varphi(\bx)), \lambda \circ \signature(\varphi(\by)) \rangle.
\]
With some abuse of notion we use the same notation as the non-robust version. 
The above universality and characteristicness results, Theorem \ref{thm:robust moments} and Theorem \ref{thm: robust signatures}, immediately translate to the corresponding kernelized statement.

From the computational side, using results from~\cite{kiraly_kernels_2019, chevyrev_signature_2022}, the normalization $\lambda$ can itself be kernelized, which then gives the following.
\begin{proposition}\label{prop:robust comp}
Let $\kernel:\cX \times \cX \to \R$ be a kernel, $\bx,\by \in \seq$ be two sequences of maximal length $L$, and $M$ be the truncation level.
Then the robust truncated signature kernel $\kernel_{\Sig, :M}(\bx,\by)$ can be evaluated in $O(L^2(c_{\kernel}+M^3)+q)$ computational steps and using $O(M^2L^2+M+q)$ memory where $c_{\kernel}$ denotes the cost of a single evaluation of the static $\kernel$ and $q$ denotes the cost of finding the non-negative root of a univariate polynomial of degree $2M$. 
\end{proposition}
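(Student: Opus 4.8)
The plan is to decompose the cost of evaluating the robust truncated signature kernel $\kernel_{\Sig, :M}(\bx,\by)$ into two essentially independent pieces: the cost of the normalization $\lambda$, and the cost of the underlying (non-robust) signature kernel computation. The starting point is the observation from Theorem~\ref{thm:sigkernel} that the non-robust truncated kernel can be computed in $O(M^3 L^2)$ steps using the dynamic-programming (Horner) scheme of \eqref{eq:sigkernel formula}. First I would recall that the normalization $\lambda$, by the Remark on tensor normalization as path normalization, acts by rescaling the path $\bx \mapsto \bx / n(\bx)$ for a scalar $n(\bx)$ depending only on the signature of $\bx$; equivalently, at the level of tensors it rescales the degree-$m$ component by a power of a normalization constant. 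The key point, following \cite{chevyrev_signature_2022}, is that this scalar $n(\bx)$ is itself determined as the root of a univariate polynomial whose coefficients are built from the tensor norms $\|\signatureLevel{m}(\kernel_\bx)\|^2$, and that these norms are exactly the diagonal values $\kernel_{\Sig,:M}(\bx,\bx)$ — hence already computable by the same machinery.

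The key steps, in order, are as follows. First, I would show that computing the static-kernel Gram entries needed to populate the arrays $\nabla^{\kernel}_{i,j}(\bx,\by)$ costs $O(L^2 c_{\kernel})$, since there are $O(L^2)$ pairs of time-indices and each requires a constant number of evaluations of $\kernel$ at cost $c_{\kernel}$ each. Second, given these precomputed inner products, the Horner recursion \eqref{eq:sigkernel formula} runs in $O(M^3 L^2)$ arithmetic steps; combining these two gives the $O(L^2(c_{\kernel}+M^3))$ term. Third, I would account for the normalization: to find $n(\bx)$ we need the level-wise squared norms $\|\signatureLevel{m}(\kernel_\bx)\|^2$ for $m = 0,\ldots,M$, which are byproducts of the same dynamic-programming pass applied to the diagonal pair $(\bx,\bx)$, and then we solve for the non-negative root of the associated degree-$2M$ polynomial at cost $q$. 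Applying $\lambda$ to the kernel value is then just a rescaling of the precomputed level-wise inner products, which is $O(M)$ and absorbed into the other terms. The additive $+q$ therefore appears once for the root-finding.

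For the memory bound, I would argue that the Horner scheme needs to store, at each recursion depth, a table indexed by the current pair of time-points $(i,j)$ together with the degree bookkeeping, giving $O(M^2 L^2)$; the polynomial whose root encodes the normalization has $O(M)$ coefficients, contributing the $+M$; and the root-finding subroutine uses $O(q)$ working memory, contributing the $+q$. Summing yields $O(M^2L^2 + M + q)$.

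The main obstacle I anticipate is \textbf{not} the routine complexity accounting but rather justifying that the normalization $\lambda$ can be kernelized at all — i.e.\ that $n(\bx)$ depends on the signature of $\kernel_\bx$ only through the computable inner products $\langle \signatureLevel{m}(\kernel_\bx), \signatureLevel{m}(\kernel_\bx)\rangle$, and that solving the resulting univariate polynomial genuinely recovers the normalization factor appearing in Theorem~\ref{thm: robust signatures}. This is precisely the content borrowed from \cite[Proposition~14]{chevyrev_signature_2022}: one must verify that the implicitly-defined $\lambda$ reduces to a scalar equation whose coefficients are inner products of signature levels, so that no direct access to the infinite-dimensional tensors $\signatureLevel{m}(\kernel_\bx) \in \Hil^{\otimes m}$ is required. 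Once this kernelization of $\lambda$ is in hand, the complexity statement is a matter of bookkeeping over the dynamic-programming pass plus the cost $q$ of the root-finding, and the remaining steps are straightforward.
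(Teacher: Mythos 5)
Your proposal matches the paper's own argument: the paper likewise treats the proposition as elementary once the dynamic-programming evaluation of $\kernel_{\Sig,:M}$ (Theorem~\ref{thm:sigkernel}) is available, and defers the kernelization of the normalization $\lambda$ --- reducing it to level-wise norms $\|\signatureLevel{m}(\kernel_\bx)\|^2$ obtained from diagonal kernel evaluations plus the root of a degree-$2M$ univariate polynomial --- to \cite{chevyrev_signature_2022}, exactly as you do. Your cost accounting ($O(L^2 c_\kernel)$ for the Gram entries, $O(M^3L^2)$ for the Horner recursion, $+q$ for root-finding) is the intended decomposition, so the proposal is correct and follows essentially the same route.
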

The algorithm in Proposition \ref{prop:robust comp} is in fact elementary given that we know how to compute the signature kernel, see \cite{chevyrev_signature_2022} for details.%

\section{Applications and Further Developments}\label{sec:applications}
Having an efficiently computable kernel for sequential data in arbitrary spaces allows us to apply kernel methods to a wide variety of problems. 
Below we discuss some of them. 
\begin{description}
\item{\textbf{A MMD for Stochastic Processes.}}
Given a set $\cX$, we often care about the space of probability measures $\cP(\cX)$ on this space. 
We have encountered the notion of characteristicness which says that we can uniquely represent a measure $\mu \in \cP(\cX)$ as an expectation, that is
\[
\mu \mapsto \E_{\bX \sim \mu}[\varphi(\bX)]
\]
is injective. 
If the co-domain of $\varphi$, the feature space, has a norm, this would induce metric 
\[
d(\mu,\nu) = \|\E_{\bX \sim \mu}[\varphi(\bX)]-\E_{\bY \sim \nu}[\varphi(\bY)]\|.
\]
The caveat is that these expectations take values in an infinite-dimensional space. 
However, for the choice $\varphi(\bx)=\kernel(\bx,\cdot)$ it turns out that the above can be efficiently computed; a straightforward computation using that $\Hil$ is a RKHS shows that
\begin{align*}
\|\E[\varphi(\bX)]-\E[\varphi(\bY)]\|^2  &= \langle \E[\varphi(\bX)]-\E[\varphi(\bY)],\E[\varphi(\bX)]-\E[\varphi(\bY)]  \rangle\\
&=\langle \E[\varphi(\bX)],\E[\varphi(\bX)]\rangle-2 \langle \E[\varphi(\bX)],\E[\varphi(\bY)]\rangle +\langle \E[\varphi(\bY)], \E[\varphi(\bY)]  \rangle \nonumber \\
&=\E[\kernel(\bX,\bX')] -2\E[\kernel(\bX,\bY)] +\E[\kernel(\bY,\bY')],
\end{align*}
where $\bX, \bX' \sim \mu$ resp.~$\bY, \bY' \sim \nu$ are independent.
The right-hand side is straightforward to estimate when only finite samples of $\mu$ and $\nu$ are given; in particular,
\[
d_{\kernel}(\mu,\nu) = \sqrt{\E[\kernel(\bX,\bX')] -2\E[\kernel(\bX,\bY)] +\E[\kernel(\bY,\bY')]}
\]
is a metric which can be easily estimated with finite samples. 
The above metric is also known as kernel maximum mean discrepancy (MMD) since again a direct calculation shows that 
\[
d_{\kernel}(\mu,\nu)=\sup_{f \in \Hil: \|f\| \le 1} \left| \int_\cX f(\bx) \mu(d\bx) - \int_\cX f(\bx) \nu(d\bx)\right|.
\]
Hence, $d_{\kernel}$ is the maximum difference we can achieve by testing our measure against functions that lie in the unit ball of the RKHS of $\kernel$.
MMDs have found many applications in ML, see \cite{muandet2017kernel} for a survey. 
Having the signature kernel at hand, then immediately yields a MMD for probability measures on paths; in particular, we obtain a metric for laws of stochastic processes. 
The theoretical guarantees of the signature kernel then translate into theoretical guarantees for the signature kernel MMD $d_{\sigkernel}$.
However, the topology on the space of probability measures induced by the signature MMD is more subtle.
When the RKHS is finite-dimensional it induces weak convergence but not for infinite-dimensional RKHS; see~\cite{chevyrev_signature_2022} for details.

\item{\textbf{Hypothesis Testing}.}
Among the most popular hypothesis testing methods is two-sample testing and signature kernels allow us to do this with stochastic processes. In particular, we can test
\[
H_0: \operatorname{Law(\bX)}= \operatorname{Law}(\bY) \text{ against }H_1:  \operatorname{Law(\bX)} \neq \operatorname{Law}(\bY)
\]
where $\bX=(\bX_t)$ and $\bY=(\bY_t)$ are two stochastic processes.
The data we are given we are given are $m$ sample trajectories from $\bX$ and $n$ sample trajectories from $\bY$.
Such situations are not uncommon, for example consider a cohort of patients in a medical trial who are divided into two subgroups (e.g.~one taking medication and the other not) and we measure medical markers during the trial period. For another example, see \cite{andres2022signature} for an application of the signature MMD to test and validate hypothesis for real-world economic scenarios.
Following \cite{gretton_kernel_2012} we reject $H_0$ if
\begin{align}\label{eq:test}
d_{\sigkernel}(\hat \mu_X, \hat \mu_Y) > c(\alpha,m,n).
\end{align}
Here, $\hat \mu_X = \sum_{i=1}^m \delta_{\bx_i}$ and $\hat \mu_Y = \sum_{j=1}^n \delta_{\by_j}$ are the empirical measures given by our observed sequences\footnote{With slight abuse of notation, each $\bx_i \in \seq$ denotes a whole sequence and not single sequence entry as before.} $\bx_1,\ldots,\bx_n,\by_1,\ldots,\by_m \in \paths$ and $c(\alpha,m,n)$ is a constant such that the test \eqref{eq:test} has a significance level\footnote{Recall that the type I error of a test is the probability of falsely rejecting $H_0$ and the type II error is falsely accepting $H_0$. If the type I error is bounded by $\alpha$ then one says the test has power $\alpha$.} $\alpha$.
In \cite{gretton_kernel_2012}, several estimators are proposed for $d_{\sigkernel}(\hat \mu_X, \hat \mu_Y)$ and these estimators differ in their degree of bias and their computational complexity. 
The function $c$ is determined by asymptotic distribution in \cite{gretton_kernel_2012}, but this choice leads in general to a very conservative test criteria, that is one seldom rejects the $H_0$. 
In practice, permutation tests often result in better empirical performance for MMD hypothesis testing, and this applies especially to the signature kernel; we refer to \cite{chevyrev_signature_2022} for details and for experiments and benchmarks against other hypothesis tests.
Beyond testing the equivalence of laws, kernel learning allows us to test for independence between the coordinates of a random variable \cite{gretton2007kernel}. 
Note that for stochastic processes, independence between coordinates is challenging since statistical dependencies can manifest between different times in different coordinates. However, quantifying and testing independence for stochastic processes have important applications such as Independent Component Analysis for stochastic processes \cite{schell_nonlinear_2021,schell2023robustness,bonnier2020signature}.

Finally, note that the signature kernel $\sigkernel$, and hence the MMD $d_{\sigkernel}$, is parametrized by hyperparameters $\theta_{\sigkernel} \in \Theta_{\sigkernel}$ and already for simple supervised learning, the hyperparameter choice was essential for good performance.
Using a standard grid-search to choose the best $\theta$ is no longer applicable for hypothesis testing since using the given data would introduce correlations which invalidate the test. 
This is a generic problem for kernel MMDs in testing problems and several solutions have been proposed which optimize for the test power. 
What works in well in practice is to follow \cite{Sriperumbudur2009KernelCA} which proposes to replace $d_{\sigkernel}$ by 
\[
(\bx,\by) \mapsto \sup_{\theta \in \Theta} d_{\sigkernel(\theta)}(\bx,\by)
\]
where we write $\sigkernel(\theta)$ to denote the dependence of $\sigkernel$ on $\theta$.
The above is again a metric and while it is not the optimal choice for the test power, it guarantees a sample test with given significance; moreover, it works well in practice with the signature kernel, see \cite{chevyrev_signature_2022} for details.
A related issue, is the choice of the static kernel $\kernel$ itself: for essentially all previous applications of the signature kernel, using the Euclidean inner product kernel as static kernel is outperformed by non-linear static kernels such as RBF.
However for hypothesis testing, using the standard inner product as the static kernel for the signature kernel can be competitive with the signature kernel built over non-linear static kernels for some datasets. 
We refer again to \cite{chevyrev_signature_2022} for a discussion of this and its relation to the question of hyperparameter choice.

\item{\textbf{Signature MMDs in Generative Models.}}
The signature MMD is a metric between any two measures on path space and can be computed efficiently. 
In particular, it does not make any parametric assumptions on the two measures that are compared, hence it can be used in a generative model to decide whether the generator produces samples that are close enough to a given target distribution. 
This has been applied to generative adversarial networks (GANs) to model financial data \cite{buehler2020data} and to generative models for (neural) stochastic differential equations \cite{kidger2021neural}.
Again, the choice of hyperparameter and static kernel is challenging from the theoretical perspective but this is not specific to the signature kernel and general heuristics from the ML literature might apply.

\item{\textbf{Gaussian Processes with Signature Covariances.}} We often require not only point predictions, but also estimates of the associated uncertainties.
Bayesian methods provide a structured approach to update and quantify uncertainties and in particular, Gaussian processes are a powerful and non-parametric method which provides a flexible way to put priors on functions of the data; see~\cite{rasmussen_gaussian_2005}.
Formally, a Gaussian process on $\cX$ is a stochastic process $f=(f_\bx)_{\bx \in \cX}$ indexed by $\bx \in \cX$ such that $f \sim N(m,\kernel)$ is determined by a mean and a covariance function
\[
m:\cX \to \R \text{ and a positive definite } \kernel:\cX \times \cX \to \R
\]
where $f_\bx$ is Gaussian with
\[
\E[f_\bx] = m(\bx) \text{ and } \operatorname{Cov}(f_\bx,f_\by).
\]
Note that we break here with our convention to denote random variables with capital letters and write $f$ instead of $F$ to be consistent with the GP literature.
Without loss of generality, one can take $m(\bx)=0$ and consider data in an arbitrary space $\cX$ provided a kernel $\kernel$ is given. 
If the kernel $\kernel$ has nice properties these often translate into desirable properties of the GP. 
Given the signature kernel, one can immediately consider a GP for sequential data. 
However, two issues arise, one more theoretical and one more applied
\begin{itemize}

    \item The existence of Gaussian processes with continuous sample trajectories is not trivial on non-compact spaces; see for example \cite{Adler2007RandomFA} for simple counterexamples.
    The required regularity estimates to ensure existence can be derived by using a classical result of Dudley \cite{dudley2010sample} and bounding the growth of the covering number which requires us to utilize the structure of $\kernel$ and $\cX$.
    In \cite{toth_bayesian_2020}, such covering number estimates are computed for the signature kernel which then gives the existence of a GP indexed by paths; further, it yields a modulus of continuity for the GP sample paths $\bx \mapsto f_\bx$.

   \item Plain vanilla GP inference requires to invert large Gram matrices which scales with complexity $O(n^3)$ in the number of samples $n$. 
   For most datasets, this is too costly and several approximation have been developed. 
   Arguably the most popular are variational inference and inducing point methods.
   To efficiently apply these to the signature kernel requires using the structure of tensors. 
   The article \cite{toth_bayesian_2020} introduced GP with signature covariances and developed a variational approach based on sparse tensors; and \cite{lemercier2021siggpde} developed a variational approach that uses diagonal approximations to the Gram matrix which allows for quick matrix inversion.
\end{itemize}
\item{\textbf{Strings, Time Series, and Classic Sequence Kernels.}}
Kernel learning for sequences is a classic topic and the case when the state space is finite or discrete -- in our notation $\cX$ is a finite or countable set of letters and $\seq$ is the set of strings with letters from $\cX$ -- has received much attention.
Haussler developed a relation-convolution kernel framework \cite{haussler1999convolution} that gave rise to so-called string kernels; see for example \cite{Lodhi2002TextCU,Leslie2004FastSK} for algorithms and applications in text learning and biology.
These approaches have been extended to allow for general state spaces with the alignment kernel \cite{bahlmann2002online,shimodaira2001dynamic,Cuturi2006AKF} that search over all possible alignments (``time-warpings''), although this may not result in a positive definite kernel. 
However, both the classic string kernel and the global alignment kernel, arise as special cases or minor variations of the signature kernel construction and other variations are possible, see \cite[Section 5 and Remark 4.10]{kiraly_kernels_2019} and \cite[Appendix B.3]{toth_seq2tens_2021}, thereby providing theoretical guarantees for these kernels.
\item{\textbf{Time-Parametrization (In-)variance.}}
The signature $\bx \mapsto \Phi_{\Sig}(\bx)$ is injective up to translation of the starting point $\bx(0)$ of the path $\bx=(\bx(t))_{t \in [0,T]}$ and so-called ``tree-like'' equivalence \cite{hambly_uniqueness_2010}.
The latter means that paths are equivalent up to reparametrization and backtracking.
In particular, $\Phi_{\Sig}(\bx)=\Phi_{\Sig}(\bx^\rho)$ where $\rho:[0,S]\to [0,T]$ is non-decreasing and $\bx^\rho(t)\coloneqq \bx (\rho(t))$.
Adding a time coordinate as in Section \ref{sec:universal}, $\bar \bx(t)\coloneqq(t,\bx(t))$, destroys any tree-like equivalence, that is $\bx \mapsto \Phi_{\Sig}(\bar \bx)$ is injective on $\pathsOneVar$, hence distinguishes paths that differ by a time-change.
Removing such a big class of invariances (the set of time-changes is infinite-dimensional) is a powerful tool and has received much attention in engineering \cite{sakoe1978dynamic} where this is known as ``time-warping''; in the above discussed discrete case of strings, this is known as global alignment \cite{kruskal1983overview}.
The same theoretical guarantees discussed in Section \ref{sec:universal} translate to the corresponding statements about functions or probability measures on such equivalence classes.

\item{\textbf{Weak and Adapted Topologies.}}The signature MMD is a metric on the space of probability measures on either $\seq$ or $\paths$.
Arguably the most important topology on the space of probability measures is the weak topology and we discussed above when the signature resp.~signature MMD metrizes this metric. 
However, the weak topology completely ignores the filtration of a stochastic process. The need for finer topologies for laws of stochastic processes that take the filtration into account has been understood a long time ago \cite{Hoover1984AdaptedPD} and have recently seen a revived interest \cite{pammer2022note}. These finer topologies run under the name of higher rank topologies (the higher the rank, the more of the filtration structure is taken into account).
In \cite{bonnier_adapted_2020}, higher rank signatures are introduced which take filtrations into account and allow us to metrize these higher rank topologies. 
The same methodology that constructs the signature kernel given a static kernel can be carried out by using the higher-rank signatures and \cite{salvi_higher_2021,horvath2023optimal} use the PDE solver to estimate the associated MMD in various machine learning tasks.

\item{\textbf{Rough Paths.}}
Throughout we made the assumption that our paths are of bounded variation, that is we defined $\sigkernel$ on $\pathsOneVar$. 
However, many stochastic processes have sample paths that are of unbounded variation, and thus are not elements of $\pathsOneVar$.
This is fairly straightforward to address with tools from stochastic analysis and rough path theory.
There are two ways to think about this: 
\begin{enumerate}[label=(\roman*)]
\item \label{itm:embed} In practice we are given sequences and not paths, and these can always be embedded as bounded variation path, $\seq \hookrightarrow \paths$. 
\item Even though we can embed any sequence as a bounded variation path as in \ref{itm:embed}, this can become flawed in the high-frequency scaling limit since the iterated integrals might not converge.
A more radical approach is to define the signature kernel for so-called rough paths. 
This requires us to prescribe the first iterated integrals since these are not well-defined as Riemann--Stieltjes integrals.
In \cite[Appendix B]{kiraly_kernels_2019} this was done for the so-called geometric rough paths, which are those for which piecewise linear approximations converge\footnote{In the notation of \cite[Appendix B]{kiraly_kernels_2019}, $\kernel^{+}_{(d,m)}$ with $d=\lfloor p \rfloor$ where $p$ denotes the $p$-variation of the rough path. 
In our current notation $d,m$ are parts of the parameter $\theta_{algebra} \in \Theta_{algebra}$ of $\sigkernel$.} and hence allow for canonical choices of the higher iterated integrals. 
This covers semimartingales and large classes of other popular stochastic processes.
The same algorithm applies if other choices of the higher order integrals are prescribed\footnote{For example, to describe a "rough" path in $d$ dimensions, one could consider sequences $\bx=(\bx_0,\ldots,\bx_L)$ in the state space $\cX=G_{m,d}$, the group-like elements; informally $\bx_i$ are the first $m$ "iterated integrals" of the underlying path on the interval $[t_i,t_{i+1}]$. In the notation of Section \ref{sec:hyperparameter}, producing such group-like elements from raw sequential data could be regarded as a preprocessing step.} but these are typically not available in the usual sequence datasets.
\end{enumerate}

\item{\textbf{Beyond (Geometric) Rough Paths.}}
Throughout we focused on the classical signature but for many applications so-called non-geometric or branched rough paths provide a different way to describe a path as a sequence of tensors. 
Much of the discussion we had so far can be translated to branched rough paths with minor modification, that is one can again define a "signature kernel" between branched rough paths\footnote{This is in the ArXiv version v1 of \cite[Appendix D]{arXivchevyrev2022signature}. 
Appendix D is not part of the published version \cite{chevyrev_signature_2022} of \cite{arXivchevyrev2022signature} for brevity.}.
One can even go beyond that and argue that the essential structure is that of embedding a sequence into a non-commutative algebra in which we "stitch" the sequence elements together by the non-commutative multiplication in the algebra. 
That this multiplication is non-commutative is essential since this algebraically captures the sequence order. 
From this point of view, the signature and kernel that we discussed so far realizes this abstract method by instantiating it with the so-called shuffle Hopf algebra and using as embedding the Lie group exponential; branched rough paths replace the shuffle algebra by the Connes--Kreimer algebra.
Following this approach, we can simply parametrize the algebraic structure and learn the best choice from the data.
The underlying theory is beyond the current scope but we refer to \cite{toth_seq2tens_2021} for a demonstration of how parametrizing the underlying algebraic structure and going beyond geometric rough paths can improve benchmark performance, often significantly, and how to deal with the computational challenges.
Similarly, see the discussion about the non-geometric signature kernel\footnote{Denoted as $\kernel^+_{(d,m)}$ when $d\neq m$ in the notation of~\cite{kiraly_kernels_2019}.} in \cite[Remark 4.5]{kiraly_kernels_2019}.
Besides improving performance, such variations lead to interesting and challenging algebraic question which no longer can be treated within a Hopf algebraic framework.
We refer to \cite{diehl2020generalized} for a detailed study of the underlying algebraic structures.

\item{\textbf{Evolving (Non-Euclidean) Data.}}
The signature kernel transforms any static kernel on the domain $\cX$ into a kernel on the corresponding space of sequences $\seq$, allowing us to design kernels for objects evolving in general (non-Euclidean) spaces.
One example arises from the field of topological data analysis, in which a method called persistent homology allows us to quantify the multi-scale topological structure` of data such as point clouds in highly nonlinear objects called persistence diagrams~\cite{dey_computational_2022}. 
In \cite{Chevyrev2018PersistencePA}, static persistence diagrams are transformed into paths, and the signature kernel allows us to capture the topological properties of a static point cloud.
Given an evolving point cloud, for instance representing a swarm of agents, we obtain a sequence of persistence diagrams which summarize the dynamic topological structure of the data, and signature kernels again provide an effective method to perform learning tasks such as regression~\cite{giusti_signatures_2021}.
Another example is based on the observation is that the definition of the signature only requires an inner product on the derivatives of a sequence. For paths evolving on Lie groups, where derivatives lie in the associated Lie algebra, one can formulate the signature kernel in exactly the same way~\cite{lee_path_2020}.
Finally, another case where a general approach to sequences can be studied with the signature kernel is to analyze classic deep learning architectures via the attractive theoretical properties of the signature kernel; for example, \cite{fermanian2021framing} studies RNNs in terms of the signature kernel, and \cite{cirone2023neural} study scaling of Resnets from this perspective.
\item{\textbf{Likelihood-free Inference.}}
Given a parametrized statistical model, the likelihood function captures its fit to data.
Often the underlying models are complex which leads to intractable likelihood functions and makes it infeasible to find the optimal parameter.
So called likelihood-free inference addresses these challenges and \cite{Dyer2022AmortisedLI} combine the signature kernel and its MMD with so-called density ratio estimation to develop a likelihood-free inference approach; see also \cite{Dyer2022ApproximateBC} for its use in so-called approximate Bayesian computation.
\end{description}

\section*{Acknowledgements}
HO and DL were supported by the Hong Kong Innovation and Technology Commission (InnoHK Project CIMDA). HO was supported by the EPSRC [grant number EP/S026347/1].

\bibliographystyle{plain}
\bibliography{signatures_survey}
\end{document}